\newtheorem{theorem}{Theorem}[section]
\newtheorem{lemma}{Lemma}[section]
\newtheorem{definition}{Definition}[section]
\newtheorem{example}{Example}[section]
\theoremstyle{remark}
\newtheorem{proposition}{Proposition}[section]
\title[On complete cscK metrics with PMY asymptotic property]{On complete constant scalar curvature K\"ahler metrics with Poincar\'e-Mok-Yau asymptotic property}
\author{Jixiang Fu}
\author{Shing-Tung Yau}
\author{Wubin Zhou}
\address{Institute of Mathematics\\ Fudan University \\ Shanghai
200433, China} \email{majxfu@fudan.edu.cn}
\address{Department of
Mathematics\\ Harvard University\\Cambridge, MA 02138, USA}
\email{yau@math.harvard.edu}
\address{Shanghai Center for Mathematical Sciences\\
Shanghai 200433, China} \email{wubin\_zhou@fudan.edu.cn}
\begin{document}

\begin{abstract}
Let $X$ be a compact K\"ahler manifold and $S$  a subvariety of $X$ with higher  co-dimension. The aim is to study complete constant scalar curvature K\"ahler metrics on non-compact K\"ahler manifold $X-S$ with  Poincar\'e--Mok--Yau asymptotic property (see Definition \ref{def}). In this paper, the methods of Calabi's ansatz and the moment construction are used to provide  some special examples  of such metrics.

\end{abstract}
\maketitle
\section{Introduction}
In K\"ahler geometry, a basic question is to find on a K\"ahler manifold a
canonical metric in each K\"ahler class, such as a K\"ahler--Einstein (K--E) metric,
a constant scalar curvature K\"ahler (cscK) metric, or even an extremal metric.
If $X $ is a compact K\"ahler manifold with the definite first Chern class, the question has been solved thoroughly and there
are lots of references on this topic. Among these, the fundamental one \cite{Yau} is
on the Calabi conjecture solved by Yau.

In the non--compact case, Tian and Yau proved in \cite{TY1,TY2} that there exists a
complete Ricci--flat metric on $X^\ast = X-D$, where $X$ is a compact K\"ahler
manifold and $D$ is a neat and almost ample smooth divisor on $X$; or $X$ is a compact
K\"ahler orbifold and $D$ is a neat, almost ample and admissible divisor on $X$.

Several years ago, the second named author presented the following question:
\vspace{2mm}

\noindent \textbf{Problem 1}. Assume that $X$ is a compact K\"ahler manifold and $S$ is its higher co-dimensional subvariety. Let $X^\ast =  X-S$. How
to find a complete canonical metric on such a non-compact K\"ahler manifold
$X^\ast$?
\vspace{2mm}

Certainly,  this problem is equivalent to finding a canonical metric on $\bar X-D$, where $\bar X$ is a compact K\"ahler manifold and $D$ is a divisor on $\bar X$. More precisely,
blowing up of $X$ along $S$, one obtains a new compact  K\"ahler manifold $\bar X=Bl_S(X)$. Then $X^\ast$ is bi-holomorphic to $\bar X-D$ where $D$ is the exceptional divisor of this blow-up. Hence our problem is transferred to finding a complete canonical metric on $\bar X-D$. However,  this blowing up process can not make  Problem 1 easier since it does not alter the geometric properties of $X^\ast$. For example, although $\mathbb CP^2-p$ is bi--holomorphic to $Bl_p(\mathbb CP^2)-D$, we can not use Tian--Yau's results mentioned above to get a complete K--E metrics on $\mathbb CP^2-p$ since the exceptional divisor $D$ is not ample.

The basic strategy to solve Problem 1 is to perturb one family of approximate metrics on $X^\ast$. This method has been carried out successfully in  \cite{AP1,AP2,APS,Ga,Ga2} to construct cscK or  extremal metrics on blow-up of a K\"ahler manifold at some points.
The key point is that the csck metric of Burns-Simanca \cite{Si}  on $Bl_0(\mathbb C^n)$
is asymptotic locally Euclidean (ALE) at infinity.

Motivated by this, if one  want to solve Problem 1 on $M-\{p_1,\cdots,p_l\}$, one should first construct a canonical metric on $\mathbb C^n-0$ which is also ALE at infinity. Fortunately, the metrics in the following theorem admit this asymptotic property. Let $r^2$ be the Euclidean norm squared function on $\mathbb C^n$.

\begin{theorem}\label{thm1}
There exist on $\mathbb C^n-0$ a family of complete  zero scalar curvature K\"ahler metrics $\eta_a=\sqrt{-1}\partial\bar\partial u_a(r^2)$ ($a>0$) with the following asymptotic properties: As $r^2\to 0$,
  \begin{equation*}
   u_a(r^2)= a\log r^2-\frac{2a}{n(n-1)}\log(-\log r^2)+O((\log r^2)^{-1});
 \end{equation*}
And as $r^2\to \infty$,
\begin{displaymath}
u_a(r^2)=\left\{
\begin{array}{ll}
 r^2+2a\log r^2+\frac{a^2}{2r^2}+O(\frac{1}{r^4}),& \text{for }\ \  n=2,\\
 r^2-\frac{na^{n-1}}{(n-1)(n-2)}(r^2)^{2-n}+\frac{a^n}{n}(r^2)^{1-n}+O((r^2)^{-n}) & \text{for }\ \  n\geq 3.
 \end{array}\right.
\end{displaymath}
Here $O(h(r^2))$ is a smooth function whose $k-$th partial derivatives for all $k\geq 0$ are bounded by a constant times $|\partial^k h(r^2)| $.
\end{theorem}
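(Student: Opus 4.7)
The plan is to reduce the scalar-flat equation to an elementary ODE via the momentum construction. Set $t=r^2$, $\tau=\log t$, and for a candidate potential $u(t)$ introduce the momentum $\rho:=tu'(t)=u_\tau$ and profile $\Phi(\rho):=t\rho'(t)=d\rho/d\tau$. A direct computation with the $U(n)$-invariant K\"ahler form $\sqrt{-1}\partial\bar\partial u$ (writing $g_{i\bar j}=u'\delta_{ij}+u''z_j\bar z_i$ and inverting by Sherman--Morrison) shows that for every radial function $F(t)$,
\[
g^{\bar j i}\partial_i\partial_{\bar j}F=\frac{1}{v(v+tv')}\bigl[(nv+(n-1)tv')F'+tvF''\bigr],\qquad v=u'.
\]
Applying this to $F=\log\det(g_{i\bar j})=(n-1)\log v+\log(v+tv')$ and rewriting in the variables $\rho,\Phi$, the scalar-curvature equation $R(\omega)=0$ becomes the inhomogeneous Euler equation
\[
\rho^2\Phi''(\rho)+2(n-1)\rho\Phi'(\rho)+(n-1)(n-2)\Phi(\rho)=n(n-1)\rho,
\]
whose general solution is $\Phi(\rho)=\rho+C_1\rho^{2-n}+C_2\rho^{1-n}$.

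Next I would fix the constants. The Poincar\'e-type boundary condition $u\sim a\log r^2$ near $r=0$ forces $\rho\to a$ as $\tau\to-\infty$, hence $\Phi(a)=0$. A simple zero would give $\rho-a\sim e^{\Phi'(a)\tau}$ and finite radial length $\int\sqrt{\Phi}\,d\tau$ at this end, so \emph{completeness} forces a double zero $\Phi(a)=\Phi'(a)=0$. This linear system has unique solution $C_1=-na^{n-1}$, $C_2=(n-1)a^n$, giving
\[
\Phi(\rho)=\rho-na^{n-1}\rho^{2-n}+(n-1)a^n\rho^{1-n}=\frac{(\rho-a)^2 Q(\rho)}{\rho^{n-1}},
\]
where $Q(\rho)=\sum_{k=0}^{n-2}(k+1)a^k\rho^{n-2-k}$ has positive coefficients. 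Hence $\Phi(\rho)>0$ for all $\rho>a$, and $\rho$ is a smooth strictly increasing function of $\tau$, defining a smooth scalar-flat K\"ahler metric on $\mathbb{C}^n-0$. Completeness at the far end $\tau\to+\infty$ is immediate since $\Phi\sim\rho\sim e^\tau$ makes $\int\sqrt{\Phi}\,d\tau$ diverge, and at $\tau\to-\infty$ the double zero gives $\sqrt{\Phi}\sim 1/|\tau|$, whose integral also diverges.

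The asymptotic expansions follow by integrating the separated relations $d\tau=d\rho/\Phi(\rho)$ and $du=\rho\,d\tau$. Near $\rho=a$, Taylor expansion yields $\Phi(\rho)=\frac{n(n-1)}{2a}(\rho-a)^2+O((\rho-a)^3)$, so $\tau\sim-\frac{2a}{n(n-1)(\rho-a)}$, equivalently $\rho-a=-\frac{2a}{n(n-1)\tau}+O(\tau^{-2})$ as $\tau\to-\infty$; integrating $u_\tau=\rho$ then gives $u_a=a\tau-\frac{2a}{n(n-1)}\log(-\tau)+O(\tau^{-1})$. At infinity I would expand
\[
\frac{1}{\Phi(\rho)}=\frac{1}{\rho}+\frac{na^{n-1}}{\rho^n}-\frac{(n-1)a^n}{\rho^{n+1}}+O(\rho^{-2n+1}),
\]
integrate to get $\tau=\log\rho-\frac{na^{n-1}}{(n-1)\rho^{n-1}}+\frac{(n-1)a^n}{n\rho^n}+O(\rho^{-2(n-1)})$, invert to express $\rho$ in $t$ (splitting off the $n=2$ case, where the $\rho^{2-n}$ term produces an extra logarithm), and perform one final integration of $\rho/t$ in $dt$ to obtain the two stated expansions of $u_a$.

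The main technical obstacle is upgrading these scalar asymptotics to the $C^\infty$ derivative control required by the definition of $O(h(r^2))$ in the theorem: one must verify that each $t$-derivative of the remainder in the expansion of $u_a$ is bounded by the corresponding derivative of the model $h$. This reduces, after the change of variables, to smoothness of $\Phi(\rho)$ on $[a,\infty)$ together with standard estimates for the separated ODE $d\tau=d\rho/\Phi(\rho)$, which is routine once positivity and the explicit form of $\Phi$ have been established.
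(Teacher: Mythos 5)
Your proposal is correct and follows essentially the same route as the paper: a rotationally symmetric Calabi ansatz reduces the scalar-flat equation to an ODE, completeness near the puncture forces a double zero of the profile at $\rho=a>0$ (fixing both integration constants to the same values, since $\rho^{n-1}\Phi(\rho)=\rho^n-na^{n-1}\rho+(n-1)a^n$ is exactly the paper's $F(\phi)$), and the two asymptotic expansions are obtained by integrating the separated relation near $\rho=a$ and near $\rho=\infty$ exactly as in Section 2. The only difference is packaging: you phrase the reduction as the linear Euler equation for the momentum profile $\Phi(\rho)$ — the Legendre-transform form the paper itself adopts in Section 3 — and your explicit factorization $\Phi=(\rho-a)^2Q(\rho)/\rho^{n-1}$ with $Q$ having positive coefficients replaces the paper's convexity argument $F''(\phi)>0$ for positivity on $(a,\infty)$.
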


For the cases of  constant scalar curvature $c\neq 0$, we have the following theorem. Denote $D^n$ as the unit disc of $\mathbb C^n$.

\begin{theorem}\label{thm2}
1. For any $c<0$,  there exist on $D^n-0$ a family of complete K\"ahler metrics $\sqrt{-1}\partial\bar\partial u_a(r^2)$ ($a>0$) with constant  scalar curvature $c$.  As $r^2\to 1$,
  these metrics are asymptotic to the Poincar\'e metric
  $$\sqrt{-1}\frac{n(n+1)}{-c}\partial\bar\partial\log(1-r^2).$$

 2. For $c>0$ and $a>0$ with $ac<n(n-1)$, there exists on $\mathbb C^n-0$ a K\"ahler  metric $\sqrt{-1}\partial\bar\partial u_a(r^2)$ with  constant scalar curvature $c$  which are not complete at infinity and asymptotic to
$$\sqrt{-1}\partial\bar\partial(b\log r^2+\kappa r^{-\frac{ 2}{\kappa}})$$
for two constants $b(>a)$ and $\kappa(>1)$.

In both cases, the metrics have the following asymptotic property: As $r^2\to 0$,
\begin{equation*}
u_a(r^2)=a\log r^2-\frac{2a}{n(n-1)-ac}\log(-\log r^2)+ O((\log r^2)^{-1}).
\end{equation*}
\end{theorem}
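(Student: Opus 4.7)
The plan is to apply the Calabi ansatz exactly as in Theorem~\ref{thm1}. Setting $s=r^2$, $t=\log s$, $\phi(t)=u(e^t)$, I introduce the momentum variable $\tau=\phi'(t)=s\,u'(s)$ and the momentum profile $\Phi(\tau)=\phi''(t)$, so that $\omega_u=\sqrt{-1}\partial\bar\partial u(r^2)$ is positive iff $\Phi>0$. A direct computation yields the scalar curvature
\[
S(\omega_u)=\frac{n(n-1)}{\tau}-\frac{1}{\tau^{n-1}}\frac{d^2}{d\tau^2}\bigl(\tau^{n-1}\Phi(\tau)\bigr).
\]
Setting $S(\omega_u)=c$ gives a linear ODE in $\tau$ that integrates twice to
\[
\tau^{n-1}\Phi(\tau)=\tau^n-\frac{c}{n(n+1)}\tau^{n+1}+c_1\tau+c_2
\]
for two free constants $c_1,c_2$.

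These constants are determined by matching the PMY asymptotic at $r^2\to 0$. Differentiating the desired expansion $u_a(r^2)=a\log r^2-\frac{2a}{n(n-1)-ac}\log(-\log r^2)+O((\log r^2)^{-1})$ forces $\tau\to a$ and $\Phi(\tau)\sim\frac{n(n-1)-ac}{2a}(\tau-a)^2$ as $\tau\to a$; equivalently, $\Phi$ must have a double root at $\tau=a$. The two conditions $\Phi(a)=\Phi'(a)=0$ pin down $c_1,c_2$ explicitly and yield $\Phi''(a)=[n(n-1)-ac]/a$, which is positive exactly when $ac<n(n-1)$ (automatic for $c\leq 0$). Conversely, once $\Phi$ has this double zero, inverting $dt/d\tau=1/\Phi$ and Taylor-expanding near $\tau=a$ reproduces the claimed two-term PMY expansion of $u_a$.

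Next I analyze the opposite end separately in each sign regime. Factor $\tau^{n-1}\Phi(\tau)=(\tau-a)^2R(\tau)$ with $R$ of degree $n-1$ and leading coefficient $-c/[n(n+1)]$; note $R(a)=\tfrac{a^{n-2}}{2}[n(n-1)-ac]>0$ in both cases of interest. For $c<0$ the leading coefficient of $R$ is positive and one checks $R(\tau)>0$ on $[a,\infty)$, so $\Phi>0$ on $(a,\infty)$; since $\Phi\sim\tfrac{-c}{n(n+1)}\tau^2$ at infinity, the relation $dt=d\tau/\Phi$ gives $t\to T<\infty$ as $\tau\to\infty$, and rescaling $s_\ast=e^T$ to $1$ places the solution on $D^n-\{0\}$. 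Completeness at $r^2=1$ follows from $\int^\infty d\tau/\sqrt\Phi=\infty$, and the linearization $1/\tau\sim\tfrac{-c}{n(n+1)}(T-t)$ integrates to the Poincar\'e asymptotic with the coefficient $n(n+1)/(-c)$. For $c>0$ with $ac<n(n-1)$, the leading coefficient of $R$ is negative while $R(a)>0$, so $R$ has a root $b>a$; at this simple zero of $\Phi$, $\int d\tau/\Phi$ diverges (hence $s\to\infty$ and the metric extends to all of $\mathbb C^n-\{0\}$) but $\int d\tau/\sqrt\Phi$ stays finite (so the metric is not complete at infinity). Linearizing $\Phi\sim\Phi'(b)(\tau-b)$ gives $b-\tau\sim Ce^{\Phi'(b)t}$, which after integration yields $u_a(r^2)=b\log r^2+\kappa r^{-2/\kappa}+\cdots$ with $\kappa=-1/\Phi'(b)$.

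The main obstacle is the sign and root-location analysis of $R$: verifying $R>0$ on $[a,\infty)$ when $c<0$, showing that $b$ is the unique root of $R$ in $(a,\infty)$ when $c>0$, and deriving the inequality $\kappa>1$. For $n=2$ all three are immediate because $R$ is linear in $\tau$; for general $n$ they reduce to polynomial estimates handled by checking signs of $R$ and $R'$ at the endpoints and bounding $-\Phi'(b)$ in terms of $a$ and $c$. Completeness at the puncture and the leading PMY expansion then follow automatically from the double zero of $\Phi$ at $\tau=a$, just as in the proof of Theorem~\ref{thm1}.
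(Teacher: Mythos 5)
Your proposal is, up to a change of variables, the same argument as the paper's: your $\tau^{n-1}\Phi(\tau)$ is exactly the paper's polynomial $F(\phi)$ from equation (\ref{phia}), your double-zero condition $\Phi(a)=\Phi'(a)=0$ is the conclusion of Lemma \ref{lemC} (the paper derives it from completeness at the puncture and then reads off the PMY expansion, while you impose it to match the PMY expansion and then note completeness follows; both directions are valid and the computations $\Phi''(a)=(n(n-1)-ac)/a$, the Poincar\'e coefficient $n(n+1)/(-c)$, and $\kappa=-1/\Phi'(b)=-b^{n-1}/F'(b)$ all agree with the paper). The boundary analyses at $\tau\to\infty$ (for $c<0$) and at the simple zero $b$ (for $c>0$), including the finite-length computation showing incompleteness, also coincide.

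The one place where your method, as described, would not go through is the positivity/root analysis you defer to ``checking signs of $R$ and $R'$ at the endpoints'': for a degree-$(n-1)$ polynomial this does not rule out an interior sign change, so it does not prove $R>0$ on $[a,\infty)$ when $c<0$, nor does it justify your unproved assertion that the zero of $\Phi$ at $b$ is \emph{simple} when $c>0$ (which you need both for $t\to+\infty$ at $b$ and for the finiteness of $\int d\tau/\sqrt{\Phi}$). The paper's mechanism is the explicit formula $F''(\phi)=(n(n-1)-c\phi)\phi^{n-2}$: for $c\le 0$ this is positive on $(0,\infty)$, so $F$ is strictly convex and the double zero at $a$ forces $F>0$ elsewhere; for $c>0$, $F''$ has exactly one positive root $n(n-1)/c$, so a double zero of $F$ at $b$ in addition to the one at $a$ would produce (via Rolle) two positive roots of $F''$, a contradiction. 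You should replace the endpoint checks by this convexity/Rolle argument. (For the claim $\kappa>1$ you are no worse off than the paper, whose proof of Theorem \ref{thm2} only establishes $\kappa>0$ and addresses $\kappa\neq 1$ in a later remark.)
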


Naturally one would ask whether there are any complete K--E metrics on $\mathbb C^n-0$ or on  $D^{n}-0$.
 Using the method in  \cite{MY}, we can prove the following theorem which turns out that in some sense the choice of cscK metrics is optimal.

\begin{theorem}[\cite{MY}]\label{thm3}
  There do not exist any complete K\"ahler-Einstein metrics on $\mathbb C^n-0$ or on $D^n-0$.
\end{theorem}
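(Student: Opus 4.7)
The plan is to split into cases according to the sign of the Einstein constant $\lambda$ in $\mathrm{Ric}(\omega) = \lambda\omega$. If $\lambda > 0$, the Bonnet--Myers theorem applied to the underlying Riemannian structure forces a diameter bound and hence compactness; since neither $\mathbb{C}^n - \{0\}$ nor $D^n - \{0\}$ is compact, this case is immediate.

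For $\lambda \le 0$ I would follow the strategy of \cite{MY}. Since $\{0\}$ has complex codimension $n \ge 2$, the punctured manifolds are homotopy equivalent to $S^{2n-1}$ and in particular satisfy $H^2(\,\cdot\,,\mathbb{R}) = 0$. Hence $\omega$ admits a global, smooth, strictly plurisubharmonic potential $u$, and the K\"ahler--Einstein equation becomes a global Monge--Amp\`ere equation
\[
\det\bigl(\partial_i\partial_{\bar j} u\bigr) = C\, e^{\lambda u}
\]
on the punctured manifold. The strategy is then to pit two pieces of information against each other. Completeness of $\omega$ at the puncture forces a definite rate of degeneration of $u$ as $r \to 0$; the explicit solutions of Theorems~\ref{thm1}--\ref{thm2} suggest that any admissible leading term must be of the form $a \log r^2$ with $a > 0$. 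On the other hand, since $\{0\}$ is pluripolar of complex codimension $\ge 2$, classical extension theorems for plurisubharmonic functions, combined with the sign of $\lambda$ in the Monge--Amp\`ere equation, yield upper bounds on $u$ near $0$ that turn out to be incompatible with the lower bound required by completeness.

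The sub-case $D^n - \{0\}$ with $\lambda < 0$ is already captured by the Mok--Yau characterization of bounded pseudoconvex domains: a bounded domain in $\mathbb{C}^n$ admits a complete K\"ahler--Einstein metric of negative Ricci curvature if and only if it is a domain of holomorphy, and the Hartogs phenomenon forces $D^n - \{0\}$ to fail pseudoconvexity as soon as $n \ge 2$. The main obstacle is therefore the Ricci-flat case on both domains and the negative case on $\mathbb{C}^n - \{0\}$, where one must carry out the two-sided asymptotic analysis of $u$ at the puncture --- upper bound from the extension theorems together with Monge--Amp\`ere, lower bound from completeness --- and argue that the borderline logarithmic rate is not compatible with the Einstein constraint. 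This delicate balancing is precisely the analytic content of \cite{MY} that would need to be transcribed to the present punctured setting.
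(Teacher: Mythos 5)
Your reduction to $\lambda\le 0$ via Myers' theorem and your treatment of $D^n-\{0\}$ for $\lambda<0$ (non-pseudoconvexity via Hartogs versus the Mok--Yau characterization of domains of holomorphy) match the paper; note that the paper's version of this step invokes the criterion for a bounded domain with a complete metric satisfying $-C\le \mathrm{Ric}\le 0$, so it disposes of the Ricci-flat case on $D^n-\{0\}$ at the same time, which your outline leaves open. The genuine gap is in the remaining cases, which are the heart of the theorem: for $\lambda<0$ and $\lambda=0$ on $\mathbb{C}^n-\{0\}$ you do not give an argument, only a programme (``pit an upper bound from pluripolar extension theorems against a lower bound from completeness''). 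That programme is not carried out, and as stated it has a flaw: a hypothetical K\"ahler--Einstein metric need not be rotationally symmetric, so you cannot import the leading asymptotics $a\log r^2$ from Theorems \ref{thm1}--\ref{thm2}, which are derived from Calabi's ansatz and the resulting ODE. The claim that the sign of $\lambda$ in the Monge--Amp\`ere equation yields an upper bound on $u$ near $0$ incompatible with completeness is precisely the assertion that needs proof, and you defer it to ``transcribing'' \cite{MY}.

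For comparison, the paper's actual arguments for these cases are short and of a different nature. For $\lambda<0$ on $\mathbb{C}^n-\{0\}$ it applies the generalized Schwarz lemma of Yau and Mok--Yau to the identity map, taking as domain metric the explicit complete scalar-flat metric of Theorem \ref{thm1}: since that metric has scalar curvature $0$, its scalar curvature is bounded below by $-K_1$ for every $K_1>0$, and the volume-form inequality $f^*\omega_h^n\le (K_1/(nK_2))^n\omega_g^n$ forces a contradiction. For $\lambda=0$ it does not extend the potential $u$ at all; it extends the pluriharmonic function $\log\det(g_{i\bar j})$ (real part of a holomorphic function because $H^1_{dR}(\mathbb{C}^n-\{0\})=0$) across the origin by Hartogs, deduces a uniform lower bound $(g_{i\bar j})>C(\delta_{i\bar j})$ near $0$, and contradicts completeness at the puncture. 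If you want to complete your write-up you should either reproduce these two arguments or supply the full two-sided asymptotic analysis you allude to; as it stands the proposal establishes only the easy positive case and part of the $D^n-\{0\}$ case.
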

 In fact,  we can prove $X^*$ can not admit any complete K\"ahler-Einstein metrics in our further paper \cite{FYZ2}.
Theorems \ref{thm1} and  \ref{thm2} remind us to recall the Mok--Yau metric in \cite{MY}. In 1980s, Mok and Yau introduced on $D^{n}-0$ the metric with bounded Ricci curvature
$$\sqrt{-1}\partial\bar\partial (\log r^2-\log(-\log r^2)).$$
They used this metric to characterize domains of holomorphy by holomorphic sectional conditions. Comparing the Mok--Yau metric with the metrics in Theorems \ref{thm1} and \ref{thm2} leads to the following definition.

\begin{definition}[see also \cite{FYZ1}]\label{def}
  Let $X$ be a compact K\"ahler manifold with a K\"ahler metric $\omega_X$ and let $S$ be a higher co--dimensional subvariety.
  A K\"ahler metric $\omega$ on $X-S$ has the Poincar\'e--Mok--Yau (PMY) asymptotic property if near the subvariety $S$
  $$\omega=\omega_X+\sqrt{-1}\partial\bar\partial(a\log r^2-b\log(-\log r^2)+ O((\log r^2)^{-1})),$$
  where $r$ is some distance function to $S$, and $a$ and $b$ are two positive constants.
\end{definition}

In the second part of this paper, we generalize Theorems \ref{thm1} and \ref{thm2}  to the cases of holomorphic vector bundles.  We will use the moment construction to find  complete cscK metrics on the complement of the zero section in (the total space of) a holomorphic vector bundle or a projective bundle (i.e. a ruled manifold). There are many references such as \cite{ACGT,HS,KS,PP} which use the method of moment construction to look for canonical metrics on K\"ahler manifolds. One can consult \cite{HS} for construction of cscK metrics on vector bundles and  \cite{ACGT} for extremal metrics on ruled manifolds.

 Let $M$ be a compact $m$--dimensional K\"ahler manifold with a cscK metric $\omega_M$. Let $(L,h)$ be a holomorphic line bundle over $M$ with a hermitian metric $h$, which is given by  local positive functions $h(z)$ defined on the open sets which  locally trivialize  $L$. For the technical  reason, assume that  there exists a constant $\lambda$ such that
\begin{equation}\label{lo}
\sqrt{-1}\partial\bar\partial \log h(z)=\lambda \omega_M.
\end{equation}
Let $(E,\pi) $ be the direct sum of $n\ (\geq 2)$ copies of  $L$  with associate hermitian metric $h$. Let $\nu$ be the logarithm of the fibre norm squared  function defined by $h$ and consider Calabi's ansate
\begin{equation*}
\omega=\pi^\ast\omega_M+\sqrt{-1}\partial\bar\partial f(\nu).
\end{equation*}
Denote the zero section of $E$ simply by $M$.  Also denote $\mathbb U$ as the set of points $p$ in $E$ such that $\nu(p)<0$. We first concern about csck metrics with {\sl PMY asymptotic property} on $E-M$ or on $\mathbb U-M$.

\begin{theorem}\label{thmlag0}
Let $M$ be a compact K\"ahler manifold and $\omega_M$ a K\"aler metric with constant scalar curvature $c_M$. Let $L$ be a holomorphic line bundle over $M$ with a hermitian $h$. Assume that $h$ and $\omega_M$  satisfy (\ref{lo}). Let $E$ be the direct sum of $n\ (\geq 2)$ copies of $L$.

1. If $\lambda\geq 0$, there exists a constant $c_0$ such that for any $c\leq c_0$, there exists  on $\mathbb U-M$ or on  $E-M$ a complete K\"ahler metric with  constant scalar curvature $c$.
Such metrics admit the Poincar\'e--Mok--Yau asymptotic property except the case that the metrics are defined on $\mathbb U-M$ with $c=c_0 (<0)$ and $\lambda>0$.

2.  If $\lambda<0$ and $c_M>0$, there exists on $E-M$ a complete positive constant scalar curvature K\"ahler metric with Poincar\'e--Mok--Yau asymptotic property.
\end{theorem}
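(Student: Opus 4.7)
The plan is to apply Calabi's ansatz (equivalently, the moment-map construction for the fiberwise $U(n)$-action on $E$) to reduce the cscK equation to a second-order ODE in the single radial variable $\nu=\log\|z\|_h^2$. Writing $\omega=\pi^*\omega_M+\sqrt{-1}\partial\bar\partial f(\nu)$ and using the curvature identity (\ref{lo}), the metric decomposes into a horizontal block proportional to $(1+\lambda\phi(\nu))\pi^*\omega_M$, where $\phi:=f'$, together with a vertical block expressible through $\phi$ and $\phi'$. Positive definiteness amounts to $\phi>0$, $\phi'>0$, and $1+\lambda\phi>0$; in particular, when $\lambda<0$ the function $\phi$ is constrained to lie in the interval $(0,-1/\lambda)$.

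From the volume form $\omega^{m+n}\sim(1+\lambda\phi)^m\phi^{n-1}\phi'\,\omega_M^m\wedge(\text{fiber form})$ one computes $\mathrm{Ric}(\omega)=-\sqrt{-1}\partial\bar\partial\log\det\omega$, and its trace with respect to $\omega$ expresses the scalar curvature as a differential expression in $\phi(\nu)$. The outcome has the schematic structure $R(\omega)=\frac{m\,c_M}{1+\lambda\phi}+\mathcal{L}[\phi]$, where $\mathcal{L}$ is a second-order differential operator that, after the Legendre-type substitution $\tau=\phi$ with profile $\Phi(\tau):=\phi'(\nu)$, becomes linear in $\Phi$. Imposing $R(\omega)=c$ thus produces a linear ODE for $\Phi(\tau)$ that integrates explicitly; reducing $M$ to a point and $\lambda$ to $0$ must recover the formulas underlying Theorems \ref{thm1} and \ref{thm2}, which serves as a sanity check.

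Next comes the boundary analysis. At the zero section one has $\nu\to-\infty$; imposing the PMY ansatz $f(\nu)=a\nu-b\log(-\nu)+O((-\nu)^{-1})$ and matching the expansion against the explicit primitive of the ODE determines $b$ in terms of $a,c,c_M,\lambda,n$ through an algebraic relation generalizing $b=2a/(n(n-1)-ac)$, while $a>0$ remains a free parameter. At the far end, completeness is equivalent to divergence of $\int\Phi(\tau)^{-1/2}\,d\tau$ as $\tau$ approaches the upper endpoint of its admissible range, which selects either the disk-bundle regime $\mathbb U-M$ (when $\phi$ tends to a finite value and the metric closes up on $\{\nu=0\}$) or the full bundle $E-M$ (when $\phi$ extends to $+\infty$, or to $-1/\lambda$ when $\lambda<0$). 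The threshold $c_0$ in part 1 is then identified as the largest $c$ for which the PMY-selected solution stays inside its admissible interval on the whole radial axis and generates a complete metric; a direct calculation from the integrated ODE pins it down.

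The main obstacle is the global control of $\phi$. For each admissible $c$, one must verify that the solution selected by the PMY data remains in $(0,\infty)$ or $(0,-1/\lambda)$ on the full radial interval, is monotone with $\phi'>0$, and yields a complete metric at the far end rather than leaving the admissible range prematurely. Part 2 is the most delicate: when $\lambda<0$ the profile is squeezed into the bounded interval $(0,-1/\lambda)$ while one simultaneously needs $\tau=\phi$ to parametrize an unbounded interval of $\nu$, and the hypothesis $c_M>0$ should enter precisely to ensure that the integrated ODE drives $\Phi(\tau)^{-1/2}$ to a non-integrable singularity at the upper endpoint. The single exception in part 1, namely $c=c_0$ on $\mathbb U-M$ with $\lambda>0$, corresponds to the degenerate case in which the subleading coefficient $b$ in the PMY expansion vanishes, which is why the PMY property drops out there.
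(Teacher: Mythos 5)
Your overall strategy --- Calabi's ansatz, the momentum profile $\varphi(\tau)=f''(\nu)$ in the variable $\tau=f'(\nu)$, explicit integration of the resulting linear ODE, and completeness read off from the divergence of $\int\varphi^{-1/2}\,d\tau$ --- is exactly the paper's route, and your completeness criterion at the zero section (a double root of $\varphi$ at $\tau=a>0$) is correct. But there are two genuine gaps. First, Part 2 is not proved. When $\lambda<0$ the admissible interval for $\tau$ lies in $(0,-1/\lambda)$, and you suggest the far end could be $\tau\to-1/\lambda$; that cannot happen, since $1+\lambda\tau\to 0$ degenerates the horizontal block of the metric. What is actually needed is a point $b$ with $a<b<-1/\lambda$ at which $\varphi$ has a \emph{double} root, $\varphi(b)=\varphi'(b)=0$, so that $\nu\to+\infty$ as $\tau\to b^-$ and the metric is complete on $E-M$. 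Producing such a pair $(a,b)$ realizing an arbitrary prescribed $c_M>0$ is the heart of the argument: the paper solves $P(b)=P'(b)=0$ for $c_M$ and $c$ as ratios of explicit integral expressions $H_1/H_2$ and $H_3/H_2$, proves their positivity via the H\"older inequality, and runs an intermediate-value argument on $H_1/H_2$ along the degenerations $b=2a$, $a\to 0$ and $a,b\to-1/\lambda$, plus a root-counting argument to get $P>0$ on $(a,b)$. None of this, nor any substitute, appears in your sketch; ``the hypothesis $c_M>0$ should enter precisely to ensure\ldots'' is a hope, not an argument.

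Second, the threshold $c_0$ and the exceptional non-PMY case are not correctly handled. The paper defines $c_0$ as the supremum of the set of $c$ for which $\varphi>0$ on $(a,+\infty)$ and must distinguish four subcases, including the one where $c_0$ is not attained and $\varphi$ instead acquires a double root at a finite $b$, putting the metric on $E-M$ rather than $\mathbb U-M$; your ``a direct calculation pins it down'' does not address this dichotomy. More importantly, your explanation of why PMY fails when $c=c_0<0$ on $\mathbb U-M$ is wrong in substance: it is not that the coefficient of $\log(-\log r^2)$ vanishes, but that $\kappa(a)=\frac{c_M}{1+\lambda a}+\frac{n(n-1)}{a}-c$ vanishes (shown by perturbing $c_0$ and contradicting its maximality), so $\varphi$ vanishes to order $3$ or $4$ at $\tau=a$ and the subleading term becomes $(-\log r^2)^{1/2}$ or $(-\log r^2)^{2/3}$ --- terms that \emph{dominate} $\log(-\log r^2)$ rather than drop out. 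Conversely, proving that the metric \emph{is} PMY in the remaining cases requires showing $\kappa(a)>0$, which the paper does by separate case-by-case root-counting arguments; your proposal imposes the PMY expansion as an ansatz rather than deriving it from the order of vanishing of $\varphi$ at $a$, which is precisely where the exception arises.
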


If $\lambda=0$, this theorem generalizes  Theorem \ref{thm1} and the case $c<0$ of Theorem \ref{thm2}.

We then consider Problem 1 on a projective bundle.
Denote $\mathcal O$ as the structure sheaf of $M$. The projective bundle $\mathbb P(E\oplus\mathcal O)$ over  $M$ has a globally defined section $s$: for $q\in M$, $s(q)$ is a point corresponding to the line $\mathcal O_q$.
The following theorem gives some special solutions to Problem 1.

\begin{theorem}\label{thmpm}
Under the assumptions of Theorem \ref{thmlag0}, if $\lambda<0$ and $c_M\in\mathbb R$ or if $\lambda >0$ and $c_M\in (m(m+2n-1)\lambda,+\infty)$, there exists on $\mathbb P(E\oplus \mathcal O)-M$ a complete constant scalar curvature K\"ahler metric with Poincar\'e--Mok--Yau asymptotic property.
\end{theorem}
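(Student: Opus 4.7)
The plan is to apply the moment construction to $\mathbb P(E\oplus\mathcal O)$ with respect to the fiberwise $S^1$-action, in parallel with the proof of Theorem \ref{thmlag0}. After the standard substitution, the cscK equation reduces to a second order linear ODE for a momentum profile $\phi(\tau)$ on a closed interval $[\tau_-,\tau_+]$: here $\tau_+$ corresponds to the divisor $\mathbb P(E)$ (which is retained in the compactification), and $\tau_-$ corresponds to the section $M\subset\mathbb P(E\oplus\mathcal O)$ (which is removed).

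The two endpoints impose genuinely different boundary conditions on $\phi$. At $\tau_+$ I would impose the Guillemin--Calabi smooth extension condition, $\phi(\tau_+)=0$ with $\phi'(\tau_+)=-1$, so that $\omega_\phi$ closes up to a K\"ahler metric across $\mathbb P(E)$. At $\tau_-$, the PMY expansion of Definition \ref{def} translates, after expressing everything in the moment variable, into a second order vanishing $\phi(\tau)\sim C(\tau-\tau_-)^2$ with a positive coefficient $C$ determined by the constants $a$, $b$ in the PMY data; this simultaneously encodes completeness near $M$ and the PMY asymptotic shape. These boundary data are precisely the analogue of the ones already used successfully in Theorem \ref{thmlag0}.

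Since the cscK ODE is explicitly integrable, I would write its general solution in closed form and fix the free integration constants from the mixed boundary data above. Matching at both endpoints will moreover force a compatibility identity relating $c$ to $c_M$, $\lambda$, $m$, $n$ and the parameters of the profile; the hypotheses ``$\lambda<0$, $c_M\in\mathbb R$'' or ``$\lambda>0$, $c_M>m(m+2n-1)\lambda$'' should be exactly what is needed for this identity to be solvable, with the threshold $m(m+2n-1)\lambda$ arising as the borderline value at which the closed form expression for $\phi$ first develops an interior zero.

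The main obstacle, as already in Theorem \ref{thmlag0}, is verifying the positivity $\phi>0$ on the open interval $(\tau_-,\tau_+)$, since only this makes $\omega_\phi$ a genuine K\"ahler metric rather than a degenerate $(1,1)$-form. I would handle this by computing the asymptotic expansions of the explicit $\phi$ at both endpoints to confirm that it enters the interior with the correct sign from each side, and then exploit the linear structure of the ODE together with the sign hypotheses on $\lambda$ and $c_M$ to preclude any interior zero. The integration itself is routine; the real content of the argument lies in this positivity check and in the simultaneous matching of the PMY end with the smooth compactification end.
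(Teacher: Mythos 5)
Your overall architecture matches the paper's: build the metric on $E-M$ via the momentum profile $\varphi$, impose the double zero $\varphi(a)=\varphi'(a)=0$ at the removed section for completeness, impose the Guillemin--Calabi condition $\varphi(b)=0$, $\varphi'(b)=-1$ at the retained divisor $D_\infty=\mathbb P(E)$ for smooth extension (this is exactly Lemma \ref{lem4}), and check positivity on $(a,b)$. However, the proposal leaves unproved precisely the step that produces the stated ranges of $c_M$, and it misidentifies where the threshold $m(m+2n-1)\lambda$ comes from. In the paper, once the four boundary conditions are imposed, the two conditions at $b$ are solved for $c_M$ and $c$ as explicit functions of the endpoints, $c_M=\tilde H(a,b)=\frac{n(n-1)H_1+L_1}{H_2}$, and the theorem reduces to determining the range of $\tilde H$ over admissible pairs $(a,b)$. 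This is done by an intermediate value argument with concrete limits: for $\lambda<0$, $\tilde H(a,2a)\to-\infty$ and $\tilde H(a,\sqrt a)\to+\infty$ as $a\to 0^+$ (Proposition \ref{ppp}); for $\lambda>0$, $\tilde H(a,2a)\to+\infty$ while $\lim_{b\to\infty}\tilde H=m(m+2n-1)\lambda$, together with the inequality $\tilde H>m(m+2n-1)\lambda$ throughout (Proposition \ref{pppp}). Your suggestion that the threshold is ``the borderline value at which $\phi$ first develops an interior zero'' is not how it arises: positivity on $(a,b)$ is in fact automatic once the boundary conditions hold (Lemma \ref{lem0}, a root-counting argument using that $(\varphi Q)''=\kappa Q$ and $\kappa(\tau)=\frac{c_M}{1+\lambda\tau}+\frac{n(n-1)}{\tau}-c$ has at most two zeros), and the threshold is the limiting value of $c_M(a,b)$ as $b\to\infty$. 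Saying the hypotheses ``should be exactly what is needed'' defers the entire quantitative content of the theorem.

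A second, more minor gap: you treat the PMY expansion as a boundary condition to impose at $\tau_-$, namely $\varphi(\tau)\sim C(\tau-a)^2$ with $C>0$. But completeness only forces $\varphi(a)=\varphi'(a)=0$; whether the leading coefficient $\varphi''(a)/2=\kappa(a)/2$ is strictly positive is a property that must be verified afterwards, since $\kappa(a)=0$ produces a genuinely different, non-PMY asymptotic (this actually occurs in case (iii) of the proof of Theorem \ref{thmlag0}). The paper handles this in Proposition \ref{ooo} by another root-counting argument showing $\kappa(a)>0$ under the present boundary conditions. Your write-up silently assumes the conclusion here. Both gaps are fillable, and by essentially the computations the paper carries out, but as written the proposal is a correct plan rather than a proof.
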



{\bf Acknowledgement.} Fu is supported in part by NSFC grant 11421061. Yau is supported in part by NSF grant DMS-0804454.  Zhou is supported by China Postdoctoral Science Foundation funded project grant No.2015M571479 .

\section{Complete cscK metrics on $\mathbb C^n-0$  and $D^n-0$.}\label{sec2}
In this section, we construct complete cscK metrics on $\mathbb C^{n\ast}$ or $D^{n\ast}$. Here  denote $\mathbb C^{n\ast}=\mathbb C^n-0$ and $D^{n\ast}=D^n-0$.  We  first follow Calabi's method \cite{Ca2} to get an ODE on the K\"ahler potential. Then we determine the constants of integration appeared in the ODE by discussing  completeness of the metric near the punctured point. Afterwards we analyze the asymptotic properties of the K\"ahler potential. Thus, Theorems \ref{thm1} and \ref{thm2} are proven. In the last subsection, we give some remarks and a simple proof of Theorem \ref{thm3}.

\subsection{Calabi's ansatz.}
Let $w=(w_1,w_2,\cdots,w_n)$ be the coordinates of $\mathbb C^n$.  Assume that the K\"ahler metric we are seeking for is rotationally symmetric. That is, if we  let
$$r^2=\sum_{\alpha=1}^n| w_\alpha| ^2\  \  \ \textup{and}\ \ \ t=\log r^2,$$
then the K\"ahler potential is a function $u(t)$. By a direct calculation,
\begin{equation*}
  g_{\alpha\bar\beta}:=\frac{\partial^2 u(t)}{\partial w_\alpha\partial\bar w_\beta}=e^{-t}u'(t)\delta_{\alpha\beta}+e^{-2t}\bar w_\alpha w_\beta
  (u''(t)-u'(t)).
\end{equation*}
Hence,
 $$\det(g_{\alpha\bar\beta})=e^{-nt}u'(t)^{n-1}u''(t),$$
 and $\omega=\sqrt{-1}\partial\bar\partial u(t)$ is a K\"ahler metric if and only if
\begin{equation*}\label{pos}
u'(t)>0\ \ \ \textup{and} \ \ \ u''(t)>0.
\end{equation*}
 For simplicity, let
\begin{equation}\label{v}
  v(t)=-\log\det(g_{\alpha\bar\beta})=nt-(n-1)\log u'(t)-\log u''(t).
\end{equation}
The components of the Ricci tensor of $\omega$ are
$$
  R_{\alpha\bar\beta}=\frac{\partial^2v(t)}{\partial w_\alpha\partial\bar w_\beta}=e^{-t}v'(t)\delta_{\alpha\beta}+e^{-2t}\bar w_\alpha w_\beta (v''(t)-v'(t))
$$
and then the scalar curvature is
\begin{equation}\label{c(t)}
  c(t)=g^{\alpha\bar\beta}R_{\alpha\bar\beta}=(n-1)\frac{v'(t)}{u'(t)}
  +\frac{v''(t)}{u''(t)}.
\end{equation}
Here $(g^{\alpha\bar\beta})$ denotes  the inverse matrix of
$(g_{\alpha\bar\beta})$. Explicitly,
\begin{equation*}
  g^{\alpha\bar\beta}=\frac{e^t}{u'(t)}\delta_{\alpha\beta}+w_\alpha\bar w_\beta(\frac{1}{u''(t)}-\frac{1}{u'(t)}).
\end{equation*}

Assume that the scalar curvature of  $\omega$ is a constant $c$. Integrating (\ref{c(t)}) with the integrating factor $u'(t)^{n-1}v'(t)$, we obtain the first order differential relation between $u(t)$ and $v(t)$
$$
  v'(t)u'(t)^{n-1}=\frac{1}{n}c (u'(t))^n +c_1
$$
with an arbitrary constant $c_1$. Substituting (\ref{v}) to the above equation and multiplying  both sides with $u''(t)$,  we get the equation
$$nu'(t)^{n-1}u''(t)-(n-1)u'(t)^{n-2}u''(t)^2-u'(t)^{n-1}u'''(t)
=\frac{1}{n}c u'(t)^n u''(t)+c_1u''(t).$$Integrating the above
equation, we obtain
$$u'(t)^n-u'(t)^{n-1}u''(t)=\frac{c}{n(n+1)} u'(t)^{n+1}+c_1u'(t)+c_2$$
with another arbitrary constant $c_2$.  If we denote $\phi(t)=u'(t)$, then
the above equation can be written as the first order differential
equation
\begin{equation}\label{phia}
  \frac{d\phi}{dt}=\frac{F(\phi)}{\phi^{n-1}}
\end{equation}
with
\begin{equation*}
F(\phi)=-\frac{c}{n(n+1)}\phi^{n+1} +\phi^n -c_1\phi-c_2
\end{equation*}
or rewritten as
\begin{equation}\label{dt}
  dt=\frac{\phi^{n-1}d\phi}{F(\phi)}.
\end{equation}
It follows  that $u(t)$ is a K\"ahler potential if and only if
\begin{equation*}
  \phi(t)>0\qquad \text{and}\qquad F(\phi)>0.
\end{equation*}

\subsection{Completeness.}
Assume that $\phi=\phi(t)$ is a solution to ODE (\ref{phia}) or (\ref{dt}) in an interval $(-\infty,t_0)$, where $t_0$ can be equal to $+\infty$, and assume that it determines a K\"ahler potential $u=u(t)$ in the  punctured disc  $D^{n\ast}(r_0)$ with radius $r_0=\exp(\frac {t_0}{2})$. In this subsection, for the sake of the completeness of $\omega=i\partial\bar\partial u(t)$ near the punctured point, the constants $c_1$ and $c_2$ in ODE (\ref{phia}) can be determined.
The key point is the following observation.
\begin{lemma}\label{lemC}
Under the above assumption, the metric $\omega$ determined by the K\"ahler potential   $u(t)$  is complete near the punctured point if and only if  $F(\phi)$  has a factor  $(\phi-a)^2$ with $a>0$ and $\lim_{t\to -\infty} \phi=a$. Hence
\begin{equation}\label{629}
c_1=na^{n-1}-\frac{c}{n}a^n\qquad  \text{and} \qquad c_2=(1-n)a^n+\frac{c}{n+1}a^{n+1}.
\end{equation}
\end{lemma}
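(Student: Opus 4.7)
The plan is to translate completeness of the metric near the puncture into a divergence condition for an integral in the variable $\phi$, and then extract from that the algebraic conditions on $F$ that force the formulae for $c_1$ and $c_2$.

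First I would compute the induced metric along the radial ray $w_2=\cdots=w_n=0$. A short calculation from the formula for $g_{\alpha\bar\beta}$ gives $g_{1\bar 1}=e^{-t}u''(t)$, and translating $dr$ into $dt$ via $t=\log r^2$ reduces the Riemannian line element to $ds^2=\tfrac12 u''(t)\,dt^2$. Completeness of $\omega$ at the puncture is therefore equivalent to the divergence
$$\int_{-\infty}^{t_0}\sqrt{u''(t)}\,dt=\infty.$$
Since $\phi'=F(\phi)/\phi^{n-1}>0$ on the domain, $\phi$ is strictly monotone in $t$, and the limit $a:=\lim_{t\to-\infty}\phi(t)\in[0,\infty)$ exists. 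Using $u''(t)=F(\phi)/\phi^{n-1}$ together with the substitution $dt=\phi^{n-1}\,d\phi/F(\phi)$ from (\ref{dt}), the completeness condition rewrites as $\int_{a^+}\sqrt{\phi^{n-1}/F(\phi)}\,d\phi=\infty$.

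Next I would rule out $a=0$. Positivity of $F$ near $\phi=0$ together with $F(0)=-c_2$ forces $c_2\leq 0$, and a brief case split on whether $c_1$ and $c_2$ vanish shows $F(\phi)$ is comparable to $\phi^k$ for some $k\in\{0,1,n\}$ as $\phi\to 0^+$. Consequently $\sqrt{\phi^{n-1}/F(\phi)}\sim C\phi^{(n-1-k)/2}$, which is integrable at $0$ for every $n\geq 2$, so $a=0$ never yields completeness. For $a>0$, $\phi'(t)\to 0$ as $t\to-\infty$ forces $F(a)=0$. Here two distinct integrals come into play: the $t$-time needed for $\phi$ to descend to $a$ is $\int_{a^+}\phi^{n-1}/F\,d\phi$, which must be infinite so the solution actually extends to $t=-\infty$; completeness requires in addition $\int_{a^+}\sqrt{\phi^{n-1}/F}\,d\phi=\infty$. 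A \emph{simple} zero of $F$ at $a$ already gives the first integrand the logarithmic behaviour $\sim C/(\phi-a)$ and hence divergence, but the second integrand is then only $\sim C/\sqrt{\phi-a}$, which is integrable—so completeness fails. To force divergence of both, $F$ must vanish at $a$ to order at least two; in that case the second integrand is $\sim C/(\phi-a)$ and divergence does hold.

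The conditions $F(a)=F'(a)=0$, i.e. $(\phi-a)^2\mid F(\phi)$, form a linear system in $c_1,c_2$ once one inserts $F(\phi)=-\tfrac{c}{n(n+1)}\phi^{n+1}+\phi^n-c_1\phi-c_2$; solving this system yields precisely the stated expressions for $c_1$ and $c_2$. The main technical subtlety is the contrast between the two integrals above: a simple zero of $F$ at $a$ is already enough to make $\phi\to a$ occur only as $t\to-\infty$, but it is insufficient for the metric to be complete at the puncture, and it is exactly this mismatch that forces the double-zero condition on $F$ and thus pins down both integration constants.
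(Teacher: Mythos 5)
Your proposal is correct and follows essentially the same route as the paper: reduce completeness at the puncture to divergence of the radial length integral $\int_{a^{+}}\sqrt{\phi^{n-1}/F(\phi)}\,d\phi$, note that this forces $F$ to vanish to order two at $a$, rule out $a=0$ by an integrability estimate, and solve $F(a)=F'(a)=0$ for $c_1$ and $c_2$. Your handling of $a=0$ and your explicit contrast between the $t$-reparametrization integral and the length integral are slightly more detailed than the paper's, but the argument is the same.
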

\begin{proof}
Since the metric $\omega=\sqrt{-1}\partial\bar\partial u(t)$ is rotationally symmetric, for any point $p\in D^{n\ast}(r_0)$, the ray $\gamma(s)=sp$, $s\in(0,1]$, is a geodesic. The tangent vector of this curve at the point $sp$ is $p$ and its square norm under the metric $\omega$ is, if we assume that $r^2(p)=1$,
$$|p|^2_{sp}=\sum w_\alpha(p)\bar w_\beta(p)g_{\alpha\bar\beta}(sp)=u''(t)r^{-2}.$$
Then the length of $\gamma(s)$ is
$$l=\int^1_0|\gamma'(s)|_{sp}ds=\int_0^1\sqrt{u''(t)}\frac{dr}{r}=\frac 1 2 \int^0_{-\infty}\sqrt{u''(t)}dt=\frac 1 2 \int_{-\infty}^0\sqrt{\frac{d\phi}{dt}}dt.$$
Under the  assumption that $d\phi/dt>0$ and $\phi(t)>0$, there is a nonnegative constant $a$ such that
  $$\lim_{t\to -\infty}\phi=a.$$
By equation (\ref{dt}), we have
$$l=\frac 1 2\int_{a}^{\phi(0)}\sqrt{\frac{\phi^{n-1}}{F(\phi)}}d\phi.$$
The completeness requires $l=+\infty$, which is equivalent to the fact  that $F(\phi)$ has a factor $(\phi-a)^2$.  Hence, we can determine $c_1$ and $c_2$ as in (\ref{629}).

We claim $a>0$. If $a=0$, $F(\phi)=\phi^n(-\frac{c\phi}{n(n+1)}+1)$. Hence,
$l<+\infty$, which leads to a contradiction.
\end{proof}

\subsection{Discussions of the solutions and proofs of Theorems \ref{thm1} and \ref{thm2}.}
Because of completeness, in this subsection  assume that the constants $c_1$ and $c_2$ have been chosen as in (\ref{629}). Hence, $F(a)=F'(a)=0$. Since
\begin{equation*}
F''(\phi)=(n(n-1)-c\phi)\phi^{n-2},
\end{equation*}
in case $c\leq 0$,  $F''(\phi)>0$ and so $F(\phi)>0$ on domain $(a,+\infty)$. In case  $c>0$, if assume that the constants $a$ and $c$ satisfy
\begin{equation*}
ac<n(n-1),
\end{equation*}
then on domain $(a,\frac{n(n-1)}{c})$, $F''(\phi)>0$ and so $F(\phi)>0$. Hence,  $F(\phi)>0$ on domain $(a,b)$ for some constant $b$ or $b=+\infty$. Thus, we obtain the solution of equation (\ref{dt}), up to a constant:
\begin{equation}\label{sol1}
t=t(\phi)=\int_{\phi_0}^\phi\frac{x^{n-1}}{F(x)}dx,\qquad \phi\in (a,b),
\end{equation}
for a  given $\phi_0\in(a,b)$.
Since $F(\phi)$ has the factor $(\phi-a)^2$,  $\lim_{\phi\to a^+}t(\phi)=-\infty$. In the following we will discuss more details of the solutions  (\ref{sol1}) for different signs of $c$ and finish the proofs of Theorems \ref{thm1} and \ref{thm2}.

\vspace{2mm}

 {\bf 1. Case  $c=0$.} In this case,
$$F(\phi)=\phi^n -na^{n-1}\phi+(n-1)a^n.$$
Since the only root of $F'(\phi)=0$ is $a$, $F(\phi)$ obtains its minimum at the point $a$  and $F(\phi)>0$ for all $\phi>a$.  As $\phi\to \infty$,  $\frac{\phi^{n-1}}{F(\phi)}\to \frac{1}{\phi}$ and  solution (\ref{sol1}) has the property that $t\simeq \log\phi$. Hence $\phi=\phi(t)$ is defined on the entire punctured  space $\mathbb C^{n\ast}$. Therefore there exist a family (depending on $a>0$) of zero cscK metrics with K\"ahler potential $u(t)$ such that $u'(t)=\phi(t)$.

Since $\phi\to a$ as $t\to -\infty $,
$$dt=\frac{\phi^{n-1}d\phi}{F(\phi)} \sim \frac{2a}{n(n-1)} \frac{1}{(\phi-a)^2}d\phi.$$
It turns out that from $\phi(t)=u'(t)=r^2u'(r^2)$,
$$ u(r^2)\sim a\log r^2-\frac{2a}{n(n-1)}\log(-\log r^2).$$
Moreover, by L'H\^osital's rule we get the more accurate expression of $u(r^2)$:
$$u(r^2)= a\log r^2-\frac{2a}{n(n-1)}\log(-\log r^2)+O((\log r^2)^{-1}).$$

On the other hand, we divide the case into $n=2$ and $n\geq 3$ to discuss the approximation of the solution as $r^2\to\infty$. For $n=2$,
$$dt=\frac{\phi d\phi}{(\phi-a)^2}=\bigl(\frac{1}{\phi-a}+\frac{a}{(\phi-a)^2}\bigr)d\phi$$
and it follows that

\begin{equation}\label{70901}
t=\log (\phi-a)+\frac 1 {(\phi-a)}
\end{equation}
 and $u(r^2)\simeq a\log r^2+ r^2$. Obviously, the derived metric is complete at entire $\mathbb C^{n*}$. Moreover, L'H\^osital's rule can be used to get more accurate estimate
$$u(r^2)=r^2+2a\log r^2+\frac{a^2}{2r^2}+{O}(\frac{1}{r^4}).$$

For $n\geq 3$, as $r^2\to \infty$
$$dt=\frac{1}{\phi}\bigl(1+na^{n-1}\phi^{1-n}-(n-1)a^n\phi^{-n}+ O(\phi^{-n-1})\bigr)$$
and then
\begin{equation*}\label{tor}
  t=\log \phi+\frac{n}{1-n}a^{n-1}\phi^{1-n}+\frac{n-1}{n}a^n\phi^{-n}+ O(\phi^{-n-1})
\end{equation*}
which implies
\begin{align*}
  \phi-e^t &=\phi\big(1-\exp(\frac{n}{1-n}a^{n-1}\phi^{1-n}+\frac{n-1}{n}a^n\phi^{-n}+ O(\phi^{-n-1})\bigr)\\
  &=\frac{n}{n-1}a^{n-1}\phi^{2-n}-\frac{n-1}{n}a^n\phi^{1-n}-\frac{n^2}{(1-n)^2}a^{2n-2}\phi^{2-2n}+ O(\phi^{-n-1}\bigr).
\end{align*}
Replacing $\phi$ by $e^t$ in the right hand side of the above equality, we have
$$\phi=e^t+\frac{n}{n-1}a^{n-1} (e^{t})^{2-n}-\frac{n-1}{n}a^n (e^t)^{1-n}+ O((e^t)^{-n-1})$$ or
$$
u(r^2)=r^2-\frac{na^{n-1}}{(n-1)(n-2)}(r^2)^{1-2n}+\frac{a^n}{n}(r^2)^{1-n}+{O}((r^2)^{-n}).
$$
Thus we have finished the proof of theorem \ref{thm1}.

We give the picture of $\phi=\phi(t)$ with $n=2$ and $a=1$ as Figure \ref{p1}. Recall that in this situation the function $\phi=\phi(t)$ is defined in equation (\ref{70901}).
Note that we also have
$$\det(g)=e^{-2t}\phi(t) \phi'(t)=\exp(-\frac{2}{\phi-1}).$$
 We give a rotational picture of the function $\exp(-\frac{2}{\phi-1})$ as Figure \ref{p2} which shows the  ALE and PMY properties of the metric.

\begin{figure}[H]
\centering
  \includegraphics[width=0.5\textwidth]{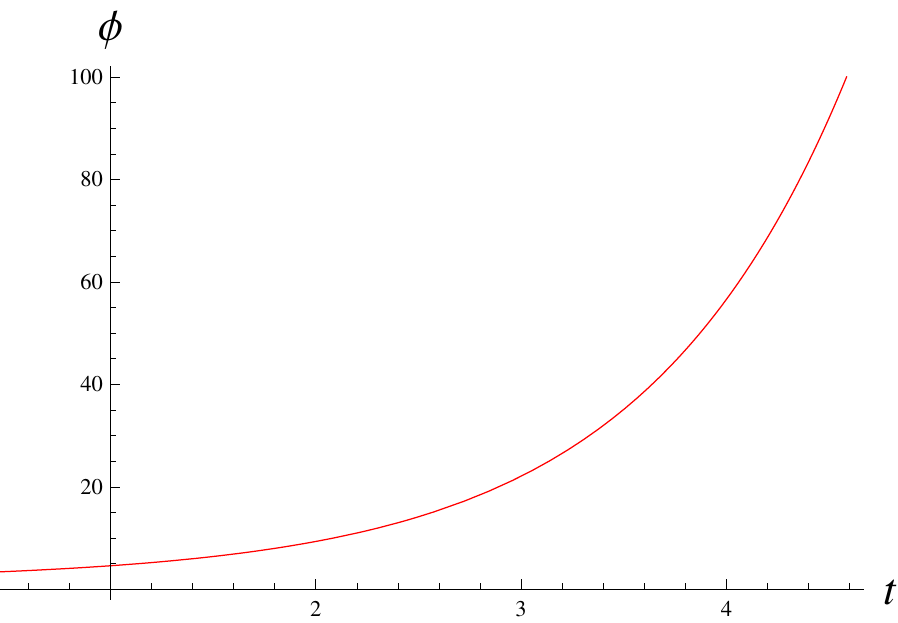}
  \caption{\small \color{blue}The graph of $\phi(t)$ with $c=0$ and $a=1$.}\label{p1}
\end{figure}

\begin{figure}[H]
\includegraphics[width=0.6\textwidth]{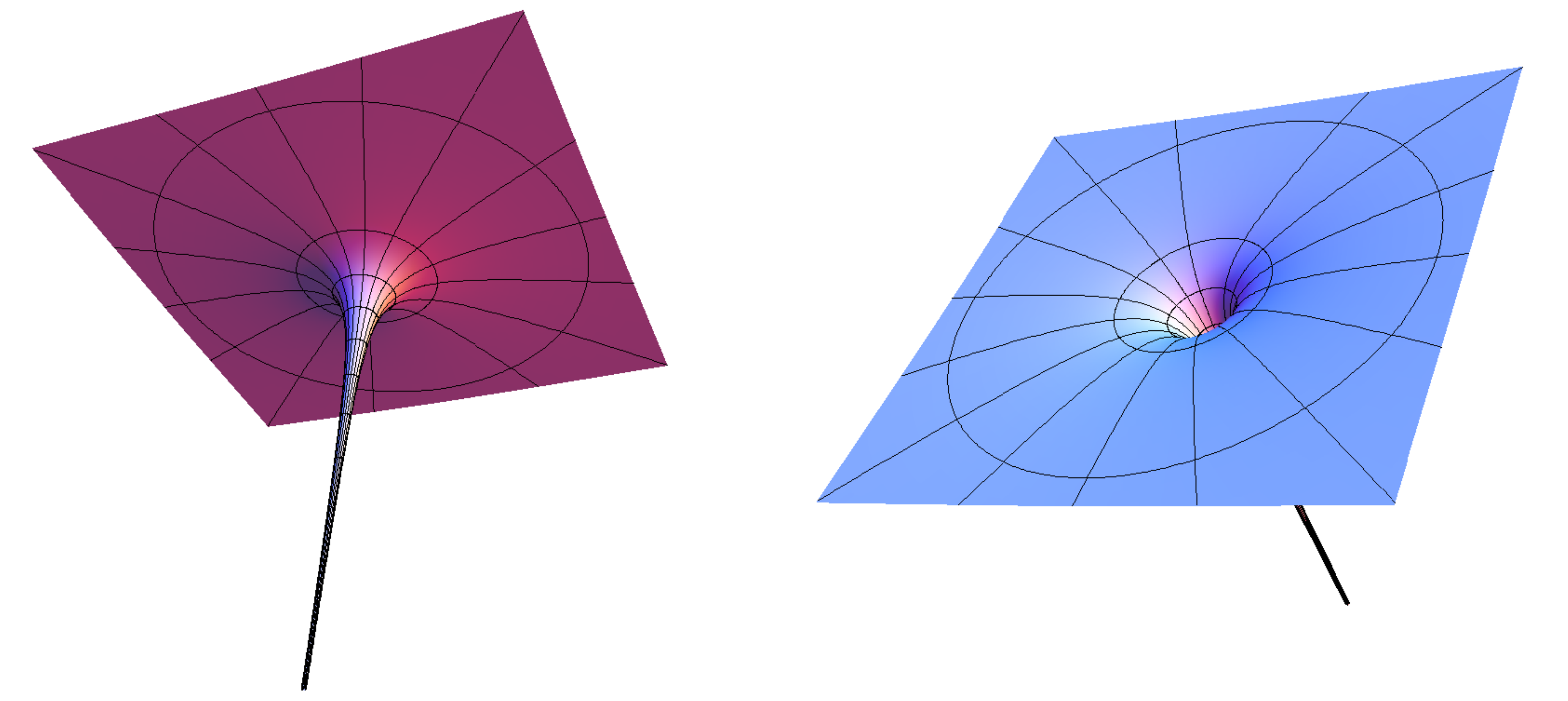}
\caption {\small \color{blue} PMY and ALE. }\label{p2}
\end{figure}
\vspace{2mm}

{\bf 2. Case  $c<0$.} In this case, we have seen that $F(\phi)>0$ when $\phi\in(a,+\infty)$.
From (\ref{sol1}), we also see that when $\phi\to+\infty$, the upper bound of $t=t(\phi)$ exists since the degree of $F$ is $n+1$. For simplicity, we take this upper bound to be zero since the solution (\ref{sol1}) is unique up to be a constant. Then $u(r^2)$ is defined on the punctured unit disc $D^{n\ast}$.

 The analysis of the boundary behavior is as follows.  As $\phi\to\infty$,
  $$t= \frac{n(n+1)}{-c}\frac{1}{\phi}+O(\frac{1}{\phi^2})$$
 which implies
 $$u(r^2)= \frac{n(n+1)}{-c}\log(-\log r^2)+O(\log r^2)$$ or $$u(r^2)= \frac{n(n+1)}{-c}\log (1-r^2)+O(\log r^2),$$
 where the right hand side  is the K\"ahler potential of the standard Poincar\'e metric on $D^n$. Hence, the metric we constructed is also complete near the boundary of $D^n$.

For the asymptotic behavior of $\phi=\phi(t)$ at the origin, it is the same as for the case $c=0$:  As $r^2\to 0$,
$$u(r^2)= a\log r^2-\frac{2a}{n(n-1)-ac}\log(-\log r^2)+{O}((\log r^2)^{-1}).$$
Then we have finished the proof of Theorem \ref{thm2} for the case $c<0$.

We give the picture of $\phi=\phi(t)$ as $n=2$, $a=1$ and $c=-6$ as Figure \ref{p3}.
Note in this situation,
 $$\frac{d\phi}{dt}=\frac{\phi}{(\phi-1)^2(\phi+3)}.$$

\begin{figure}[H]
\centering
  \includegraphics[width=0.5\textwidth]{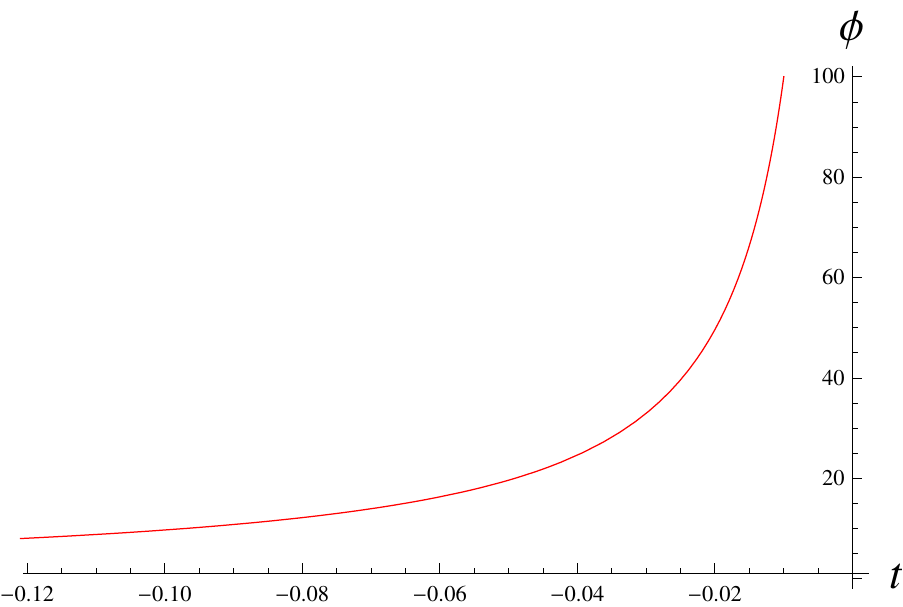}
  \caption{\small \color{blue} The graph of $\phi(t)$ with $c=-6$.}\label{p3}
\end{figure}
\vspace{2mm}

{\bf 3. Case $c>0$.} We have seen that if the constants $c$ and $a$ satisfy the relation $ac< n(n-1)$, then on domain $(a,\frac {n(n-1)}{c})$, $F(\phi)>0$. Obviously, when $\phi$ is big enough, $F(\phi)<0$. Hence we can let $b$ be the first number in $(a,+\infty)$ such that $F(b)=0$.
It follows that there is a polynomial $G(\phi)$ such that we can write
$$F(\phi)=\frac{c}{n(n+1)}(\phi-a)^2(b-\phi)G(\phi).$$
We first claim $G(b)>0$.  If $G(b)=0$, then $F(b)=F'(b)=0$. Together with $F(a)=F'(a)=0$, there are at least two different positive roots for the equation $F''(\phi)=0$. However, equation
$$F''(\phi)=-c\phi^{n-1}+n(n-1)\phi^{n-2} = 0$$
has only one positive root $\phi=\frac{n(n-1)}{c}$, which leads to a contradiction.
Hence, as $\phi\to b$,
\begin{equation}\label{ffd}
t\sim -\kappa\log (b-\phi),
\end{equation}
where
$$\kappa=-\frac{b^{n-1}}{F'(b)}>0.$$
This implies that when $\phi\in(a,b)$, $t\in (-\infty,+\infty)$ and then the K\"ahler potential $u(r^2)$ is defined on entire $\mathbb C^{n*}$.

The approximation of $u(r^2)$ near the zero is the same as for the case $c=0$:
 $$u(r^2)= a\log r^2-\frac{2a}{n(n-1)-ac}\log(-\log r^2)+{O}((\log r^2)^{-1}).$$
Whereas when  $t\to\infty$, from (\ref{ffd}) we can derive
$$u(r^2)= b\log r^2+\kappa r^{-\frac{2}{\kappa}}+O(r^{-2}).$$
The metric is not complete as $r^2\to \infty$. In fact as in the proof of Lemma \ref{lemC}, the length of the geodesic ray $\gamma(s)=sp$ on domain $(1,+\infty)$ is
\begin{equation*}
    l=\int_{\phi(\log(r^2(p)))}^{b}\sqrt{\frac{\phi^{n-1}}{F(\phi)}}d\phi  \simeq \int_{\phi(\log(r^2(p)))}^b \frac{1}{\kappa\sqrt{\phi-b}}d\phi <\infty.
  \end{equation*}
Thus we have finished the proof of Theorem \ref{thm2} for the case $c>0$.

We give the picture of function  $\phi=\phi(t)$ in case $n=2$, $a=1$ and $c=1$ as Figure \ref{p4}. In this situation,
 $$
\frac{d\phi}{dt}=\frac{6\phi}{(\phi-1)^2(4-\phi)}.
$$
\begin{figure}[hb]
\centering
  \includegraphics[width=0.5\textwidth]{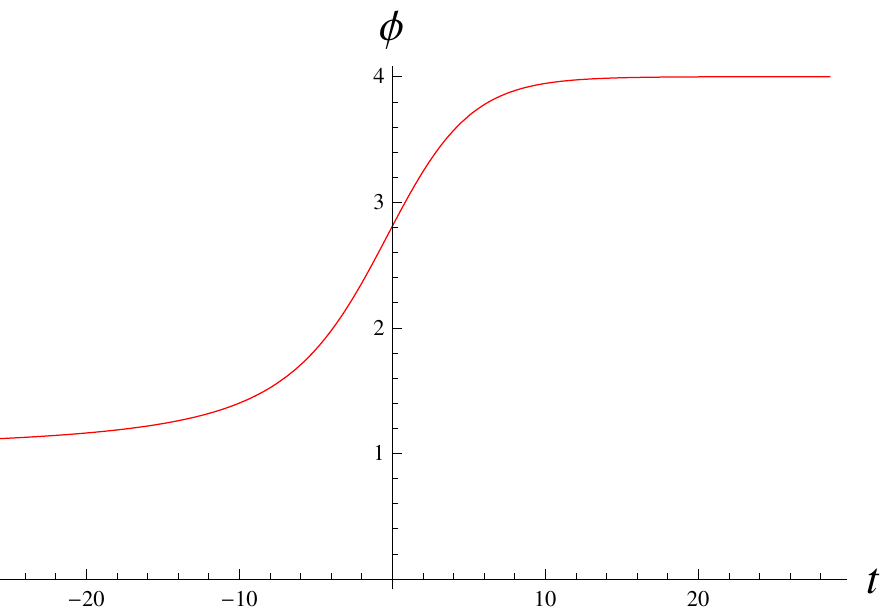}
  \caption{\small \color{blue} The graph of $\phi(t)$ with $c=1$.}\label{p4}
\end{figure}

\subsection{Further remarks}

1. For $n = 1$, $\partial\bar\partial \log r^2=0$ and
$$\sqrt{-1}\partial\bar\partial(-\log(-\log r^2))$$
is the standard Poincar\'e metric on $D^*$ with Gauss curvature $-1$. One can also construct on $\mathbb C^*$ a complete metric with zero Gauss curvature
$$\sqrt{-1}\partial\bar\partial(\log r^2)^2=\sqrt{-1} \frac{dz\wedge d\bar z}{r^2}.$$

2. It is mentioned in Introduction that the Mok--Yau metric defined on
$D^{2*}$ has good properties. One can see that $\sqrt{-1}\partial\bar\partial(-\log(-\log r^2))$
is also a K\"ahler metric on $D^{2*}$. However, its scalar curvature is infinity as $r^2\to 0$. In
fact, the term $\log r^2$ in the Mok--Yau metric results in the boundedness of the
scalar curvature near the punctured point.
Hence, the asymptotic property appeared  in Definition \ref{def} is
named as the PMY asymptotic property.
\vspace{2mm}

3. Write $\mathbb CP^n=\mathbb C^n\cup\mathbb CP^{n-1}$ and viewed  zero as a point $p$ in $\mathbb P^n$. One will ask whether the metric on $\mathbb C^{n\ast}$ constructed above with $c>0$ can be extended across $\mathbb CP^{n-1}$.  This is impossible. In fact,
it can be seen form  Lemma \ref{lem4} in section \ref{SPEC} below that the metric $\sqrt{-1}\partial\bar\partial u(t)$ can be extended across  $\mathbb CP^{n-1}$ if and only if
\begin{equation}\label{kap}
  \kappa=-\frac{b^{n-1}}{F'(b)}=1.
\end{equation}
Replacing  $\phi$ by $a\phi$ and $c$ by $a^{-1}c$, we can assume that $a=1$ in $F(\phi)$.   From $\kappa=-\frac{b^{n-1}}{F'(b)}$ and $F(b)=0$,
 we get the relation
$$b=\frac{n^2-1-c+(1-\frac{1}{\kappa})b^{n}}{n^2-c}.$$
Then $b>a=1$ implies $\kappa\not=1$,  which is a contradiction to  (\ref{kap}).
 So in this way we can not get a complete cscK metric on $\mathbb CP^{n}- p$.

In our another paper  \cite{FYZ2}, it has been proved that there also do not exist any complete cscK metrics on $\mathbb CP^n-p$ with PMY asymptotic property. Nevertheless,  a family of complete extremal metrics on $\mathbb CP^n-p$ have been constructed in \cite{FYZ2}.
\vspace{2mm}

4. At last, we give a simple proof of Theorem \ref{thm3} which states that there are not any complete K--E metrics on $\mathbb C^{n*}$ or on $D^{n*}$.

\begin{proof}[Proof of Theorem \ref{thm3}] Since $\mathbb C^{n\ast}$ and $D^{n\ast}$ are not compact, by Myers' theorem, we only need to consider the cases $c\leq 0$. In \cite{MY}, Mok and Yau proved that if a bounded domain $\Omega$ admits a complete hermitian metric such that $-C\leq \text{Ricci curvature}\leq 0$, then $\Omega $ is a domain of holomorphy. Since $D^{n*}$ is not a holomorphic domain, it does not admit a complete E--K metric with $c\leq 0$.

 For the nonexistence of negative K--E metrics on $\mathbb C^{n*}$, we use generalized Yau's Schwarz Lemma.
 \begin{lemma}\cite{MY,Yau2}
 Let $(M,\omega_g)$ be a complete hermitian manifold with scalar curvature bounded below by  $-K_1$ and let $(N,\omega_h)$ be a hermitian manifold of the same dimension  with Ricci curvature $\text{Ric}\leq -K_2\omega_h$ for some $K_2>0$.  If $f:M\to N$ is a holomorphic map and the Jacobian is nonvanishing at one point, then $K_1>0$ and
 $$f^*\omega_h^n\leq \left(\frac{K_1}{nK_2}\right)^n\omega_g^n.$$
 \end{lemma}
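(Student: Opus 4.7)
The plan is to analyze the ratio of top-degree forms $v := f^*\omega_h^n / \omega_g^n$, a non-negative smooth function on $M$ that is strictly positive at the point where the Jacobian of $f$ is nonvanishing. The target inequality is equivalent to $\sup_M v \leq (K_1/(nK_2))^n$, and I would extract it from a differential inequality for $\log v$ combined with a maximum-principle argument. Along the way the positivity of $K_1$ will fall out for free, since $v > 0$ somewhere forces the right-hand side of the final bound to be positive.

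On the open set where $v > 0$, write $v = \det(f^*h)/\det(g)$ in local holomorphic coordinates. Since the (Chern-)Ricci form of any hermitian metric equals $-\sqrt{-1}\partial\bar\partial\log\det$ of the metric, one obtains
$$\sqrt{-1}\partial\bar\partial\log v = -f^*\mathrm{Ric}(\omega_h) + \mathrm{Ric}(\omega_g).$$
Taking the trace with respect to $\omega_g$ gives
$$\Delta_g \log v = -\mathrm{tr}_g f^*\mathrm{Ric}(\omega_h) + \mathrm{scal}(g).$$
Using the hypothesis $\mathrm{Ric}(\omega_h) \leq -K_2\,\omega_h$ and applying the arithmetic-geometric mean inequality to the eigenvalues of $f^*\omega_h$ relative to $\omega_g$, one has
$$-\mathrm{tr}_g f^*\mathrm{Ric}(\omega_h) \;\geq\; K_2\,\mathrm{tr}_g f^*\omega_h \;\geq\; n K_2\, v^{1/n}.$$
Combined with $\mathrm{scal}(g) \geq -K_1$, this yields the key differential inequality
$$\Delta_g \log v \;\geq\; n K_2\, v^{1/n} - K_1.$$

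The last step is to conclude $v \leq (K_1/(nK_2))^n$ from this inequality by a maximum-principle argument on the complete manifold $(M,\omega_g)$. In the compact case one simply evaluates at a maximum of $\log v$, where $\Delta_g \log v \leq 0$, forcing $n K_2\, v_{\max}^{1/n} \leq K_1$. For the non-compact complete case, one would produce a sequence $p_k \in M$ along which $\log v(p_k) \to \sup \log v$ and $\Delta_g \log v(p_k) \to 0$, then pass to the limit in the differential inequality. The principal obstacle is that the classical Omori-Yau maximum principle requires a lower bound on \emph{Ricci} curvature, whereas the hypothesis here only provides a lower bound on the \emph{scalar} curvature. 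This is precisely the refinement of Mok-Yau that makes the theorem useful: since only the trace $\Delta_g \log v$ (and not the full Hessian) appears in the inequality, one can replace the Omori-Yau principle by an integral/cut-off argument (of Karp $L^1$-Liouville type, exploiting the scalar-curvature bound to control the volume growth of level sets of $\log v$) and still extract the limiting inequality $n K_2 (\sup v)^{1/n} \leq K_1$, which is the desired conclusion.
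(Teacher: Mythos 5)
The paper does not prove this lemma at all: it is quoted verbatim from \cite{MY,Yau2} and used as a black box, so there is no internal proof to compare your argument against; I therefore assess the proposal on its own merits. The first half of your argument is correct and is the standard computation: for $v=f^*\omega_h^n/\omega_g^n$ one has $\sqrt{-1}\partial\bar\partial\log v=\mathrm{Ric}(\omega_g)-f^*\mathrm{Ric}(\omega_h)$ on $\{v>0\}$, the pullback of the semi-negative form $\mathrm{Ric}(\omega_h)+K_2\omega_h$ stays semi-negative under a holomorphic map, and tracing with $\omega_g$ together with the arithmetic--geometric mean inequality gives $\Delta_g\log v\geq nK_2v^{1/n}-K_1$. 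This is indeed the reason only the scalar curvature of $M$ enters. (Two small points: in the Hermitian, non-K\"ahler category the identity requires the Chern--Ricci form, Chern Laplacian and Chern scalar curvature throughout; and since $\log v\to-\infty$ on the analytic set where the Jacobian degenerates, it is cleaner to run the maximum principle on $w=v^{1/n}$, which satisfies $\Delta_g w\geq K_2w^2-(K_1/n)w$.)

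The genuine gap is in the closing step, which is precisely the content of the Mok--Yau/Yau refinement. Your proposed mechanism --- a Karp-type $L^1$-Liouville or cut-off argument ``exploiting the scalar-curvature bound to control the volume growth of level sets'' --- does not work as stated: a lower bound on scalar curvature gives no control whatsoever on volume growth (Bishop--Gromov and all standard volume comparison require a Ricci lower bound), so there is nothing to launch an integral or cut-off argument from. Nor does the differential inequality $\Delta_g w\geq K_2w^2-(K_1/n)w$ by itself imply $\sup w\leq K_1/(nK_2)$ on an arbitrary complete manifold; some substitute for the Omori--Yau principle that is actually tied to the stated curvature hypothesis is indispensable, and that substitute is exactly what \cite{MY} and \cite{Yau2} construct. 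To make the proof complete you must either cite their generalized maximum principle explicitly as the final step, or reproduce its proof; the sentence you have written in its place asserts the hard part rather than proving it. The derivation of $K_1>0$ from the nonvanishing of the Jacobian is fine once the main inequality is in hand.
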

Now take $M=N=\mathbb C^{n\ast}$ and take the metric $\omega_g$ on $M$ as in Theorem \ref{thm1}. Then the above lemma leads to the nonexistence of negative K--E metric on $N=\mathbb C^{n\ast}$.

As for the Ricci--flat case, if we let $\omega=\sqrt{-1}g_{i\bar j}dz_i\wedge d\bar z_j$ be a complete Ricci--flat metric on $\mathbb C^{n\ast}$, then the function $\log\det(g_{i\bar j})$ is  pluriharmonic on it. Since the de Rham cohomology group $H^1_{dR}(\mathbb C^{n\ast},\mathbb R)$ vanishes,  $\log\det(g_{i\bar j})$ is the real part of a holomorphic function. By Hartogs' extension theorem for holomorphic functions, $\log\det(g_{i\bar j})$ is pluriharmonic on the entire space $\mathbb C^n$. Hence, $(g_{i\bar j})>C(\delta_{i\bar j})$ for some positive constant $C$.   Then following the discussions on page 49 of \cite{MY}, one can get a contradiction to the completeness of the metric near the origin of $\mathbb C^n$.
\end{proof}

\section{A momentum construction of complete cscK metrics}\label{sec3}

This section is devoted to prove Theorem \ref{thmlag0}. The first subsection almost follows the paper \cite{HS}. That is we first use the Calabi's
ansatz to derive an ODE and then use the moment profile to simplify it. In the second subsection completeness of metrics near zero section and at infinity are used to get constraint conditions. In the third subsection, we then consider the existence of metrics and their asymptotic property.


\subsection{The momentum construction}\label{momc}
Let $M$ be a compact K\"ahler manifold with a K\"ahler metric $\omega_M$. Let $\pi:L\to M$ be a holomorphic line bundle with a hermitian metric $h$. For any point $q\in M$, there is a holomorphic coordinate system $(U,z=(z_1,\cdots,z_m))$ of $q$ with $z(q)=0$ such that
$L|_{U}$ is holomorphically trivial. Under this trivialization, the hermitian metric $h$ can be given by a positive function $h(z)$.
Assume that $\omega_M$ and $h$ satisfy the condition
\begin{equation}\label{70101}
\sqrt{-1}\partial\bar\partial\log h=\lambda \omega_M,\qquad \text{for some constant $\lambda$}.
\end{equation}

Let $E$ be the direct sum of $n\ (\geq 2)$ copies of $L$, i.e. $E=L^{\oplus n}$,
with an associated hermitian metric still denoted by $h$. We also denote $\pi:E\to M$. We have a local trivialization of $E$ induced
from one of $L$ and denote the fiber coordinates by $w=(w_1,\cdots,w_n)$.  In the following, we denote $\alpha$ and $\beta$ as the lower index of the components of $w$ and $i$ and $j$ as the lower index of the components of $z$.
There is a fibrewise norm squared function $r^2$ on the total space of $E$ defined by $h$
$$r^2=h(z)\sum_{\alpha=1}^n|w_\alpha|^2.$$
If $M$ is viewed as the zero section of $E$, i.e. the set defined by $r^2=0$, we want to construct complete cscK metrics on $E- M$ under condition (\ref{70101}). In this section denote $E-M$ simply by $E^\ast$.

Let
\begin{equation}\label{70405}
\nu=\log r^2=\log h(z)+t,\qquad \text{with}\qquad t=\log\bigl(\sum_{\alpha=1}^n|w_\alpha|^2\bigr).
\end{equation}
Consider Calabi's ansatz
\begin{equation*}
  \omega=\pi^*\omega_M + \sqrt{-1}\partial\bar\partial f(\nu).
\end{equation*}
Using condition (\ref{70101}), we have
\begin{equation*}
\begin{aligned}
\omega=&(1+\lambda f'(v))\pi^\ast\omega_M+f''(\nu)\sqrt{-1}\partial\log h\wedge \bar\partial \log h \\
&+f'(v)\cdot\sqrt{-1}(\partial \log h(z)\wedge \bar\partial t+\partial t\wedge\bar\partial \log h(z))\\
&+f'(v)\cdot \sqrt{-1}\partial\bar\partial t+f''(v)\cdot \sqrt{-1}\partial t\wedge \bar\partial t
\end{aligned}
\end{equation*}
and
\begin{equation}\label{70102}
\omega^{m+n}=(1+\lambda f'(v))^m\pi^\ast\omega_M^m\wedge (f'(v)\cdot \sqrt{-1}\partial\bar\partial t+f''(v)\cdot \sqrt{-1}\partial t\wedge \bar\partial t)^n.
\end{equation}
The reason one can derived the above equality is $\det (\frac{\partial^2 t}{\partial w_\alpha\partial\bar w_\beta})=0$.  In practise,  when computing at any point
$p\in \pi^{-1}(q)$, one can let $w_\alpha(p)=0$ for $2\leq \alpha\leq \beta$.
Then $\omega$ is (strictly) positive  if and only if
\begin{equation}\label{pos2}
f'(\nu)>0,\qquad f''(\nu)>0,\qquad \text{and}\qquad 1+\lambda f'(\nu)>0.
\end{equation}

\begin{definition}
The above constructed metric $\omega$ is call a { bundle adapted metric}.
\end{definition}

Next we compute the Ricci curvature and the scalar curvature of the bundle adapted metric $\omega$.
From (\ref{70102}), we have
\begin{equation}\label{det}
\det(\omega)=\det(\omega_M)\cdot (1+\lambda f'(\nu))^me^{-nt}f'(\nu)^{n-1}f''(\nu).
\end{equation}
Let
$$\Psi(\nu)=\log \bigl((1+\lambda f'(\nu))^m f'(\nu)^{n-1} f''(\nu)\bigr).$$
For any $q\in M$, we can assume that the local coordinates $z=(z_1,\cdots,z_n)$ around $q$  also satisfy $\partial h|_{q}= \bar\partial h|_{q}=0$. Then under  assumption (\ref{70101}),
the Ricci form of $\omega$ at a point $p\in\pi^{-1}(q)$ is
\begin{align*}
  Ric(\omega)|_p=&-\sqrt{-1}\partial\bar\partial (\log\det(g_M)+\Psi(\nu)-nt)|_{p}\\
  =&Ric(\omega_M)|_q-\lambda \Psi'(\nu)\omega_M|_q\\
  &-(\Psi'(\nu)-n)\sqrt{-1}\partial\bar\partial t|_p-\Psi''(\nu)\sqrt{-1}\partial t\wedge \bar\partial t|_p,
\end{align*}
where $Ric(\omega_M)$ is the Ricci form of $\omega_M$ on $M$. The matrix composed by components of metric $\omega$ at $p$ is
\begin{equation*}
\left (\begin{array}{cc}
(1+\lambda f'(\nu))(g_{i\bar j})_{m\times m} & 0\\
0 & \bigl(f'(\nu)\delta_{\alpha\beta}+f''(\nu)\bar w_\alpha w_\beta\bigr)_{n\times n}
\end{array}
\right),
\end{equation*}
where $(g_{i\bar j})_{m\times m}$ is the coefficients matrix of metric $\omega_M$.
Its inverse matrix is
$$\begin{pmatrix}
\frac{1}{1+ \lambda f'(\nu)}(g^{i\bar j})_{n\times n} & 0 \\ 0 & (\frac{e^t}{f'(\nu)}\delta_{\alpha\beta}+w_\alpha\bar w_\beta(\frac{1}{f''(\nu)}-\frac{1}{f'(\nu)}))_{n\times n}\end{pmatrix}.$$
If $c_M$ denotes the scalar curvature of $\omega_M$, the scalar curvature of $\omega$ at the point $p$ is
\begin{equation}\label{cnu}
  c=\frac{c_M}{1+ \lambda f'(\nu)}-\frac{\lambda m\Psi'(\nu)}{1+\lambda f'(\nu)}-(n-1)\frac{\Psi'(\nu)-n}{f'(\nu)}-\frac{\Psi''(\nu)}{f''(\nu)}.
\end{equation}
The above formula of scalar curvature is obviously globally defined.

Usually it is more suitable to use the  Legendre transform to solve scalar curvature equation (\ref{cnu}). From the positivity (\ref{pos2}) of $\omega$, $f(\nu)$ must be strictly convex. Then one can take the Legendre transform $\mathcal{F}(\tau)$ of $f(\nu)$. The Legendre transform $\mathcal{F}(\tau)$ is defined in term of the variable $\tau=f'(\nu)$ by the formula
$$f(\nu)+\mathcal{F}(\tau)=\nu\tau.$$
Let $I\subset \mathbb{R}_+$ be the image of $f'(\nu)$. The \emph{momentum profile} $\varphi(\tau)$ of the metric is defined to be $\varphi:I\to \mathbb{R}$,
$$\varphi=\frac{1}{\mathcal{F}''(\tau)}.$$
Then the following relations can be verified:
$$\varphi(\tau)=f''(\nu)\qquad  \text{and}\qquad  \frac{d\tau}{d\nu}=\varphi. $$
Also, $\nu$ can be viewed as a function of $\tau$, up to a constant,
\begin{equation}\label{ttau}
  \nu(\tau)=\int \frac{1}{\varphi(\tau)} d\tau.
\end{equation}
The advantage of the Legendre transform is that the scalar curvature of $\omega$ can be described in terms of $\varphi(\tau)$ and the domain $I$ of $\tau$. Especially, the boundary completeness (see the next subsection) and  the extendability properties (see the next section) can also be read off from the behaviour of $\phi(\tau)$  near the end points of $I$.

Using these transformations, we have
$$\Psi(\tau)=\log((1+\lambda \tau)^m\tau^{n-1}\varphi(\tau)).$$
Let
$$Q(\tau)=(1+\lambda \tau)^m \tau^{n-1}.$$
Then $\Psi=\log (Q\varphi)$.
By direct computation,
\begin{equation*}
\begin{aligned}
&\Psi'(\nu)=\frac{1}{Q}\frac{\partial(Q\phi)}{\partial\tau}\\
&\Psi''(\nu)=-(m\lambda\tau^{n-1}+(n-1)(1+\lambda\tau))\frac{\varphi}{Q^2}\frac{\partial(Q\varphi)}{\partial\tau}+\frac{\varphi}{Q}\frac{\partial^2(Q\varphi)}{\partial \tau^2}.
\end{aligned}
\end{equation*}
Inserting the above equalities into (\ref{cnu}), and replacing $f'(\nu)$ by $\tau$, then simplifying it, at last we obtain
\begin{equation}\label{70201}
c= \frac{c_M}{1+ \lambda \tau}+\frac{n(n-1)}{\tau}-\frac{1}{Q}\frac{\partial^2 (Q\varphi)}{\partial^2\tau}.
\end{equation}

Now we assume that $c_M$ is a constant. We want to look for $\varphi$ such that $c$ is a constant. By integrations, we  solve  equation (\ref{70201}) as
\begin{equation}\label{70401}
\begin{aligned}
  &(\varphi Q)(\tau)=(\varphi Q)(a)+(\varphi Q)'(a)(\tau-a)+P(\tau),\qquad \text{with}\\
  &P(\tau)=\int_{a}^\tau (\tau-x)\biggl(\frac{c_M}{1+ \lambda x}+\frac{n(n-1)}{x}-c\biggr)Q(x)dx,
  \end{aligned}
\end{equation}
where $a$ is the left endpoint of $I$. Here $(\varphi Q)(a)$ and $(\varphi Q)'(a)$ are constants. $P(\tau)$ is a polynomial and hence  $\varphi(\tau)$ is  a real rational function.

\subsection{Completeness}
Assume that $I=(a,b)\subset \mathbb R^{+}$, $b$ may be infinity,  is the maximum interval where $\varphi(\tau)$ determined by (\ref{70401}) is defined and positive. Further assume that
\begin{equation*}
\lim_{\tau\to a^{+}}\nu(\tau)=-\infty
\end{equation*}
 and
 \begin{equation*}
 1+\lambda \tau>0,\qquad \text{when}\ \tau\in(a,b).
 \end{equation*}
 Then from the above subsection, the bundle adapt metric $\omega$ is well-defined on $\mathbb U^\ast(\nu(b))=\{p\in E\ |\ -\infty<\nu(p)<\nu(b)\}\subset E^\ast$. Here  $\nu(b)=\lim_{\tau\to b^{-}}\nu(\tau)$. If $\nu(b)$ is infinity, then $\mathbb U^\ast(\nu(b))=E^\ast$, and if $\nu(b)$ is a constant, we can take an integration constant in (\ref{ttau}) such that $\nu(b)=0$ and hence $\mathbb U^\ast(\nu(b))$ is $\mathbb U^\ast=\mathbb  U-M$ as defined in Introduction. We first establish the following lemma.

\begin{lemma}\label{070501}
Under the above assumptions, the bundle adapt metric $\omega$ is complete near the zero (punctured) section if and only if $a>0$ and $\varphi(a)=\varphi'(a)=0$. Thus, $\varphi(\tau)=\frac{P(\tau)}{Q(\tau)}$.

Moreover, if $b$ is finite, then $\omega$ is defined on $E^\ast$ and $\omega$ is complete if and only if $\varphi$ also satisfies $\varphi(b)=\varphi'(b)=0$; Whereas if $b$ is infinity, then $\omega$ is defined on $E^\ast$ or on $\mathbb U^\ast$ and is automatically complete.
\end{lemma}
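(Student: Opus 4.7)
The plan is to parallel Lemma \ref{lemC}: reduce completeness at each end of the momentum interval $(a,b)$ to the divergence of an explicit improper integral in $\tau$, and then read off the conditions on $\varphi$ from the boundary asymptotics of this integrand.

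First I compute the length along a radial fiber ray. Fix $q\in M$ and $p$ in the fiber $E_q-\{0\}$, and consider $\gamma(s)=sp$ for $s\in(0,1]$. Choosing local coordinates near $q$ with $\partial h|_q=\bar\partial h|_q=0$, the horizontal--vertical block structure of $\omega$ collapses the calculation to the pure vertical part, and the same kind of computation as in Section~\ref{sec2} gives
\begin{equation*}
|\dot\gamma(s)|_\omega^{\,2}=\frac{f''(\nu(s))}{s^{2}},\qquad \nu(s)=2\log s+\log(h(q)|p|^{2}).
\end{equation*}
Fiberwise rotational symmetry makes this ray length-minimizing in its fiber, so the distance to the zero section in that fiber equals $\mathrm{length}(\gamma)$. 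The change of variables $d\nu=2\,ds/s$ followed by $d\nu=d\tau/\varphi(\tau)$ yields
\begin{equation*}
\mathrm{length}(\gamma|_{[s_0,1]})=\tfrac12\int_{\tau(s_0)}^{\tau(1)}\frac{d\tau}{\sqrt{\varphi(\tau)}}.
\end{equation*}
The same integrand reappears on an interval $(\tau_1,b)$ when the ray is run outward, so completeness at the zero-section end (resp.\ the outer end) is equivalent to divergence of this integral at $\tau=a^{+}$ (resp.\ at $\tau=b^{-}$).

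For the zero-section end, suppose first $a>0$, hence $Q(a)>0$. The standing hypothesis $\nu\to-\infty$ as $\tau\to a^{+}$ forces $\int^{a}d\tau/\varphi=+\infty$, hence $\varphi(a)=0$, i.e.\ $(\varphi Q)(a)=0$ in (\ref{70401}). Since $P(\tau)=O((\tau-a)^{2})$, the next term in the expansion of $\varphi Q$ at $a$ is $(\varphi Q)'(a)(\tau-a)=\varphi'(a)Q(a)(\tau-a)$; if $\varphi'(a)\neq 0$ then $\varphi\sim C(\tau-a)$ near $a$ and $\int^{a}d\tau/\sqrt{\varphi}<\infty$, contradicting completeness. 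Hence $\varphi'(a)=0$ as well, and $\varphi=P/Q$. To rule out $a=0$, I would observe that when $a=0$ the dominant contribution to the integrand of $P$ is $n(n-1)x^{n-2}$ coming from $\tfrac{n(n-1)}{x}\cdot Q(x)$, giving $P(\tau)=O(\tau^{n})$ and therefore $\varphi(\tau)=P/Q=O(\tau)$ near $\tau=0$; then $\int_{0}d\tau/\sqrt{\varphi}<\infty$, and allowing nonzero integration constants $(\varphi Q)(0),(\varphi Q)'(0)$ only makes $\varphi$ grow more slowly near $0$, so the length integral remains finite.

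For the outer end, if $b<\infty$ then $Q(b)>0$ and the same dichotomy applies: $\omega$ extends to all of $E^{\ast}$ iff $\nu(\tau)\to+\infty$ as $\tau\to b^{-}$ iff $\varphi(b)=0$, and completeness at the outer end then forces $\varphi'(b)=0$ in addition. If $b=+\infty$, direct inspection of (\ref{70401}) shows that $\varphi=P/Q$ grows at least linearly at infinity; precisely, $\varphi\sim\mathrm{const}\cdot\tau^{2}$ if $c\neq 0$ and $\varphi\sim\mathrm{const}\cdot\tau$ otherwise (the constant being nonzero in every admissible regime of $(\lambda,c_M)$), so $1/\sqrt{\varphi}$ is not integrable at $+\infty$ and the length is automatically infinite, independently of whether $\nu(+\infty)$ is finite (so $\omega$ lives on $\mathbb U^{\ast}$) or infinite (so $\omega$ lives on $E^{\ast}$). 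The main obstacle in the proof is the careful bookkeeping required to exclude $a=0$ and to establish the linear lower bound on $\varphi(\tau)$ at $b=+\infty$ through every sign regime of $c$, $\lambda$ and $c_M$; everything else is a direct rerun of the argument in Lemma \ref{lemC}.
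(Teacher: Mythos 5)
Your proposal is correct and follows essentially the same route as the paper: reduce completeness to the divergence of the fiber-ray length integral $\tfrac12\int d\tau/\sqrt{\varphi}$ at the endpoints of $I$, conclude that a double zero of $\varphi$ is needed at each finite complete end, rule out $a=0$ via the leading term $\tau^{n}$ of $P$, and get automatic completeness at $b=+\infty$ from $\deg P-\deg Q\in\{1,2\}$. The only quibble is cosmetic: when $a=0$ with nonzero integration constants, $\varphi=((\varphi Q)(0)+\cdots)/Q$ blows up (rather than ``grows more slowly'') near $0$ since $Q(0)=0$, but this still makes the length integral converge, so your conclusion stands.
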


\begin{proof}
For any point $q\in M$, $p\in \pi^{-1}(q)\cap \mathbb U^\ast(\nu(b))$, we consider the ray starting  from $q$ on the fiber $\pi^{-1}(q)$:
\begin{equation}\label{70406}
\gamma(s)=s\cdot p,\qquad s\in (0,1]\ \  \text{or}\ \ s\in [1,s_0)
\end{equation}
for $s_0^2=\exp(\nu(b)-\nu(p))$. Such an $s_0$ can be derived from  the following calculation by (\ref{70405}):
\begin{equation*}
\nu(b)=\nu(s_0\cdot p)=\log h(z(q))+t(s_0\cdot p)=\log s_0^2+\nu(p).
\end{equation*}

Since $\pi^{-1}(q)\cap \mathbb U^\ast(\nu(b))$ is a totally geodesic submanifold of $(\mathbb U^\ast(\nu(b)),\omega)$ and the induced metric on $\pi^{-1}(q)\cap\mathbb U^\ast(\nu(b))$ is $U(n)$-invariant, $\gamma(s)$ is a geodesic on $\pi^{-1}(q)\cap\mathbb U^\ast(\nu(b))$, and hence is also a geodesic on $\mathbb U^\ast(\nu(b))$. Also since $M$ is compact, the metric $\omega$ is complete if and only of the lengths of the rays $\gamma(s)$ defined in (\ref{70406}) are infinity.

As done in Lemma \ref{lemC}, the length of $\gamma(s)$ on domain  $(0,1]$ is
\begin{equation*}
l_1=\int_0^1|\gamma'(s)|ds=\frac 1 2 \int_{-\infty}^{\nu(p)}\sqrt{f''(\nu)}d\nu=\frac 12 \int_a^{\tau(\nu(p))}\frac{1}{\sqrt{\varphi(\tau)}}d\tau.
\end{equation*}
The completeness near the zero section requires $l_1=+\infty$, which is equivalent to that $\varphi(\tau)$ has a factor $(\tau-a)^2$, i.e.  $\varphi(a)=\varphi'(a)=0$.
We claim $a>0$. If $a=0$,  the lowest degree term of polynomial $P(\tau)$ defined in (\ref{70401}) would  be determined as
\begin{equation*}
\int_0^\tau(\tau-x)n(n-1)x^{n-2}dx=\tau^n.
\end{equation*}
Hence we can write $P(\tau)$ as $P(\tau)=\tau^n(1+A(\tau))$ for some polynomial $A(\tau)$ and thus get
\begin{equation*}
\varphi(\tau)=\frac{P(\tau)}{Q(\tau)}=\tau\frac{1+A(\tau)}{(1+\lambda\tau)^m}.
\end{equation*}
In this way we find $l_1<+\infty$, which is a contradiction.

Next, we should consider the endpoint $b$. The length $l_2$ of $\gamma(s)$ for $s\in [1,s_0)$  is
\begin{equation*}
l_2=\int_1^{s_0}|\gamma'(s)|ds=\frac 1 2\int_{\nu(p)}^{\nu(s_0\cdot p)=\nu(b)}\sqrt{f''(\nu)}d\nu=\frac 1 2 \int_{\tau(\nu(p))}^{\tau(\nu((b)))=b}\frac 1{\sqrt{\varphi}}d\tau
\end{equation*}
If $b$ is finite, then $P(\tau)$ has a factor $(\tau-b)$. By (\ref{ttau}), $\lim_{\tau\to b^{-}}\nu(\tau)=+\infty$. Hence, $\omega$ is well-defined on $E^\ast$. If $\omega$ is complete, the above $l_2$ is also infinity, which is equivalent to say that $\varphi(b)=\varphi'(b)=0$. If $b$ is infinity, then $\varphi(\tau)$ is defined on $(a,+\infty)$.  Since  $\wp\triangleq\deg P(\tau)-\deg Q(\tau)=2$ or 1, from (\ref{ttau}) if $\wp=2$, $\omega$ is defined on $\mathbb U^\ast$ and if $\wp=1$, $\omega$ is defined on $E^\ast$.   Also since  $\wp=2$ or $1$,  there exists a constant $C$ big enough  such that $\varphi(\tau)<C\tau$ as $\tau\to\infty$. Hence, in this situation, $l_2$ is infinity and $\omega$ is automatically complete.
\end{proof}

\subsection{Existence of complete cscK metrics}\label{pfthm4}
We discuss the solutions in this subsection divided into three cases: $\lambda> 0$, $\lambda=0$ and $\lambda<0$.
\vspace{2mm}

 {\bf 1. Case $\lambda> 0$.} Given constants $c_M$, $\lambda> 0$, and $a>0$, define the set $\mathfrak C$ to be of ``allowable scalar curvatures" as
\begin{equation*}
\mathfrak C=\{\ c\in\mathbb R\ |\ \varphi(\tau)>0\ \ \textup{for}\ \ \tau\in (a,+\infty)\ \}.
\end{equation*}
$\mathfrak {C}$ is not empty since $\varphi(\tau)$ is positive if  $c<<0$.  $\mathcal{C}$ has a supermum. In fact, if $c>0$, $P(\tau)$ and hence $\varphi(\tau)$ will be negative when $\tau$ is big enough. Hence, the supermum $c_0$ of $\mathfrak C$ is nonpositive. We can easily get the conclusions:   If $c_M\geq 0$,  $c_0=0\in\mathfrak C$; If $c_M<0$,  two possibilities occur: one is $c_0\in\mathfrak C$, and the other is $c_0\notin \mathfrak C$, which means that there exists a constant $b$ such that $\varphi(b)=0$ and $\varphi(\tau)$ is positive on $(a,b)$.
Hence we should consider the existence of K\"ahler metrics with constant scalar curvature $c$ as the following four cases:
$$ (i)\ c<c_0; \ \  (ii)\ c=c_0=0\in\mathfrak C;\ \  (iii)\ 0> c=c_0\in\mathfrak C;\ \ \textup{or}\ (iv)\ c=c_0\not\in\mathfrak C.$$

\begin{proposition}\label{prop3}
Given constants $c_M$, $\lambda> 0$ and $a>0$, there exists a constant $c_0\leq 0$ such that:

1. For any $c\leq c_0$ in cases (i) and  (iii), there exists a complete cscK metric $\omega$ on $\mathbb U^\ast$ with constant scalar curvature $c$; and

2. For $c=c_0$ in cases (ii) and (iv), there exists a complete cscK metric $\omega$ on $E^\ast$ with constant scalar curvature $c$.
\end{proposition}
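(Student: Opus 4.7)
The plan is to apply Lemma~\ref{070501} to the rational function $\varphi(\tau)=P(\tau)/Q(\tau)$: the completeness condition near the zero section in Lemma~\ref{070501} forces $\varphi(a)=\varphi'(a)=0$, which kills the two integration constants in (\ref{70401}) and pins down $\varphi$ uniquely from $c$. Observe first that the positivity conditions (\ref{pos2}) for the bundle adapted metric reduce, under $\tau\in(a,+\infty)$ with $a>0$ and $\lambda>0$, to the single requirement $\varphi(\tau)>0$, since then $\tau>0$ and $1+\lambda\tau>0$ automatically. Hence everything hinges on controlling the sign of $\varphi_c$, viewed as a one-parameter family in $c$.

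Monotonicity in $c$ is immediate from
$$\partial_c P(\tau)=-\int_a^\tau (\tau-x)\,Q(x)\,dx<0 \qquad (\tau>a),$$
so $\mathfrak C$ is downward-closed. The function $\frac{c_M}{1+\lambda x}+\frac{n(n-1)}{x}$ is bounded below on $(a,+\infty)$, so taking $c$ sufficiently negative makes the integrand of $P$ positive throughout, showing $\mathfrak C\neq\emptyset$. Conversely, for $c\neq 0$ the leading term of $P(\tau)$ has degree $m+n+1$ with coefficient proportional to $-c\lambda^m$, so $c>0$ forces $\varphi_c(\tau)<0$ for large $\tau$. Hence $c_0=\sup\mathfrak C\leq 0$.

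I would then dispatch the four cases. In cases (i) $c<c_0$ and (iii) $c=c_0<0\in\mathfrak C$, we have $\varphi_c>0$ on $(a,+\infty)$ with $b=+\infty$, and since $c\neq 0$ the degree count gives $\wp=\deg P-\deg Q=2$; Lemma~\ref{070501} then produces a complete cscK metric on $\mathbb U^\ast$. In case (ii) $c=c_0=0\in\mathfrak C$, the leading $-cQ$ term disappears, so $\wp=1$, and Lemma~\ref{070501} yields a complete cscK metric on $E^\ast$. Case (iv) $c=c_0\notin\mathfrak C$ is the delicate one: by downward monotonicity, $\varphi_{c_0}=\inf_{c<c_0}\varphi_c\geq 0$ on $(a,+\infty)$; letting $b\in(a,+\infty)$ be its smallest zero, $\varphi_{c_0}\geq 0$ holds on both sides of $b$, so $b$ is a local minimum of a smooth rational function (smooth because $Q(b)>0$), forcing $\varphi_{c_0}'(b)=0$. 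With $b$ finite and $\varphi(b)=\varphi'(b)=0$, Lemma~\ref{070501} again gives a complete cscK metric on $E^\ast$.

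The main obstacle is the tangent-zero argument in case (iv): monotonicity delivers nonnegativity of $\varphi_{c_0}$ for free, but one must exclude a simple zero by exploiting smoothness of $\varphi_{c_0}$ at $b$ together with the two-sided nonnegativity. Once this is settled, the remaining degree count to distinguish $\wp=1$ from $\wp=2$ (and hence $E^\ast$ versus $\mathbb U^\ast$) is an elementary manipulation using $Q(\tau)=(1+\lambda\tau)^m\tau^{n-1}$ and the identity $\int_a^\tau (\tau-x)x^k\,dx=\tau^{k+2}/((k+1)(k+2))+O(\tau^{k+1})$, and is already encoded into Lemma~\ref{070501}.
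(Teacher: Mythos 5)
Your proposal is correct and follows essentially the same route as the paper: introduce $\mathfrak C$ and $c_0=\sup\mathfrak C\leq 0$, use the degree count $\deg P-\deg Q\in\{1,2\}$ to decide whether $\nu(\tau)$ stays finite (metric on $\mathbb U^\ast$) or diverges (metric on $E^\ast$), and in case (iv) deduce $\varphi'(b)=0$ from two-sided nonnegativity of $\varphi_{c_0}$ at its first zero $b$, then invoke Lemma \ref{070501}. The only difference is that you make the monotonicity of $\varphi_c$ in $c$ explicit via $\partial_c P<0$, a point the paper leaves implicit.
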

\begin{proof}
For cases (i) and (iii), since the degrees of polynomials $P(\tau)$ and $Q(\tau)$ are  $m+n+1$ and $m+n-1$ respectively, the limit of $\nu(\tau)$ defined by (\ref{ttau})  is finite as $\tau\to +\infty$. Set this constant to be zero. Then the metric $\omega$ is defined on $\mathbb U^\ast$. According to  Lemma \ref{070501}, $\omega$ is complete.

For case (ii),  $\deg P(\tau)-\deg Q(\tau)=1$. The limit of $\nu(\tau)$ is infinity as $\tau\to +\infty$. Hence the metric is defined on $E^\ast$ and is complete by Lemma \ref{070501}.

For case (iv),  $\varphi(\tau)\geq 0$ for $\tau\in (a,+\infty)$. In this case there exists a constant $b$ such that $\varphi(\tau)>0$ in $\tau\in (a,b)$ and $\varphi(b)=0$. Hence $\varphi'(b)=0$. According to Lemma \ref{070501}, $\omega$ is defined on $E^\ast$ and is complete.
\end{proof}


We give two examples.
\begin{example}\label{ee}
Consider the case $c_0\in\mathfrak C$ and $c_0<0$.
 Take
 $$c=\frac{c_M}{1+\lambda a}+\frac{n(n-1)}{a}.$$
We have
\begin{equation*}
\begin{aligned}
&\frac{c_M}{1+\lambda x}+\frac{n(n-1)}{x}-c\\
=&\frac{a-x}{a(1+\lambda a)x(1+\lambda x)}\left(\lambda(c_M a+n(n-1)(1+\lambda a))x+n(n-1)(1+\lambda a)\right).
\end{aligned}
\end{equation*}
If $c_M<0$, $\lambda>0$ and $a>0$ satisfy
\begin{equation*}
c_M=-\frac{n(n-1)(1+\lambda a)^2}{\lambda a^2},
\end{equation*}
we find that $P(\tau)>0$ when $\tau\in (a,+\infty)$. It is easy to check that
$$ c_0=c=-\frac{n(n-1)}{\lambda a^2}.$$
\end{example}

\begin{example}\label{bb=0} Then consider the case $c_0\notin \mathfrak C$. Let $m=1$, $n=2$, $\lambda=1$, $c_M=-4$  and $a=1$.  It follows that
  $$\varphi(\tau)=\int_1^\tau(\tau-x)\big(-cx^2-(c+2)x+2\big) dx.$$
  We can solve the inequality $\varphi(\tau)\geq 0$ to get $c\leq \psi(\tau)$. Here
  $$\psi(\tau)=\frac{\frac{1}{3} - \tau + \tau^2 - \frac{\tau^3}{3}}{\frac{7}{12}-
  \frac{5}{6} \tau + \frac{1}{6}\tau^3 + \frac{1}{12}\tau^4}.$$
Hence $\varphi(b)=\varphi'(b)=0$ if and only if $c_0=\min _{\tau\in (a,+\infty)}\psi(\tau)=\psi(b)$.

The pictures of $\psi(\tau)$ and $\varphi(\tau)$ are given as Figure \ref{c}. We find that $\psi(\tau)$ achieves its maximum at $\tau=4.4641$ with the maximum $-0.3094$. So $c_0=-0.3094$ and $b=4.4641$. Thus, $\varphi(\tau)$ gives a complete cscK metric on $E^\ast$ with scalar curvature $c_0$.
   \begin{figure}[ht] \centering
   	\subfigure[ \color{blue}the graph of $\psi(\tau)$] { \label{a}
   		\includegraphics[width=0.5\columnwidth]{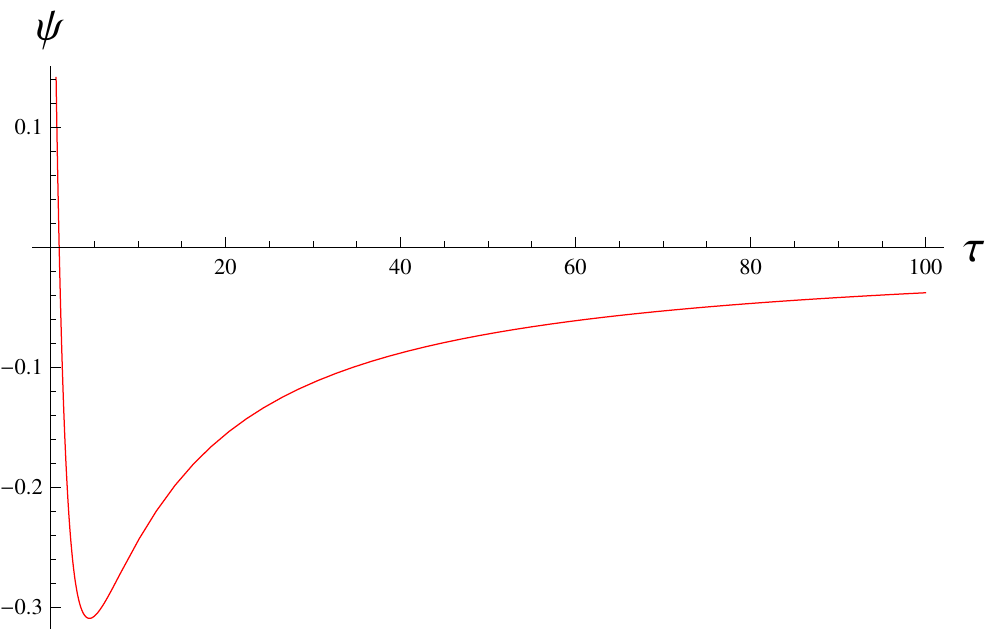}
   	}
   	\subfigure[\color{blue} the graph of $\varphi(\tau)$] { \label{b}
   		\includegraphics[width=0.5\columnwidth]{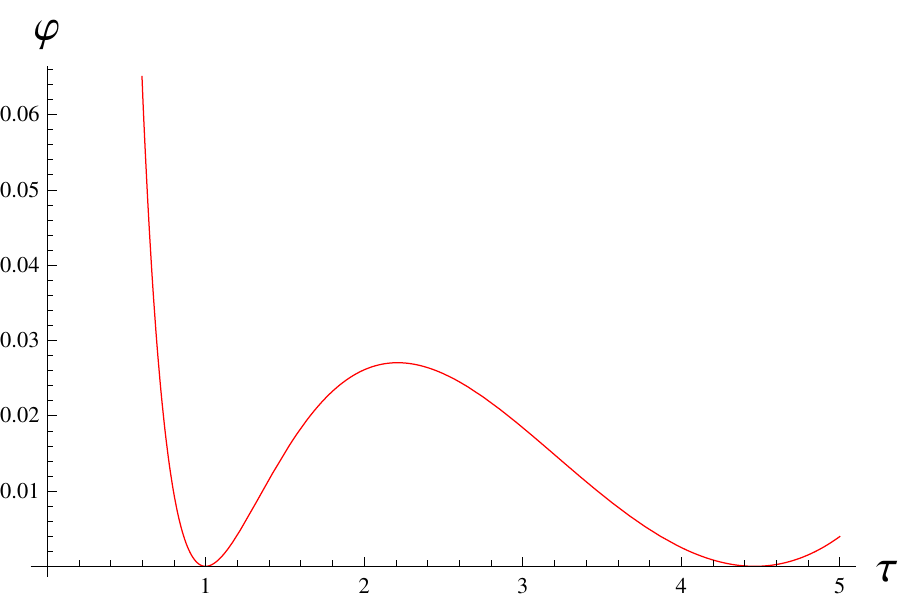}
   	}
   \caption{\small \color{blue}The case of $c_0\notin\mathfrak C$.}
   	\label{c}
   \end{figure}

\end{example}
 We consider the asymptotic property. Let
$$\kappa(\tau)=\frac{c_M}{1+\lambda\tau}+\frac{n(n-1)}{\tau}-c.$$
Since $\varphi(a)=\varphi'(a)=0$, $\kappa(a)=\varphi''(a)$.

\begin{proposition}\label{pop}
For cases (i), (ii) and (iv) the cscK metrics in Proposition \ref{prop3} have the PMY asymptotic property, and for case (iii), the metrics have the asymptotic property: As $r^2\to 0$,
$$ f(r^2)= a\log r^2-2(\frac 3{\kappa'(a)})^{\frac 1 2}(-\log r^2)^{\frac 1 2}+O(\log(-\log r^2)),$$
or
$$f(r^2)= a\log r^2-\frac 3 2 (\frac{8}{\kappa''(a)})^{\frac 1 3}(-\log r^2)^{\frac 2 3}+O((-\log r^2)^{\frac 1 2}).$$
\end{proposition}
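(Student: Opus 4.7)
The plan is to extract the asymptotic form of $f(\nu)$ as $\nu=\log r^2\to-\infty$ directly from a Taylor expansion of the momentum profile $\varphi$ at the left endpoint $\tau=a$. Since by Lemma \ref{070501} we have $\varphi(a)=\varphi'(a)=0$ and $\varphi=P/Q$, with $P''(\tau)=\kappa(\tau)Q(\tau)$ coming from (\ref{70401}), expanding $P$ and $Q$ in powers of $\tau-a$ and dividing gives
\[\varphi''(a)=\kappa(a);\ \ \varphi'''(a)=\kappa'(a)\ \text{if}\ \kappa(a)=0;\ \ \varphi^{(4)}(a)=\kappa''(a)\ \text{if}\ \kappa(a)=\kappa'(a)=0.\]
Setting $\epsilon=\tau-a$, one therefore obtains one of the three leading-order models
\[\varphi=\tfrac{\kappa(a)}{2}\epsilon^2(1+O(\epsilon)),\quad \tfrac{\kappa'(a)}{6}\epsilon^3(1+O(\epsilon)),\quad\text{or}\quad \tfrac{\kappa''(a)}{24}\epsilon^4(1+O(\epsilon)),\]
with positive leading coefficient in each case (forced by $\varphi>0$ on $(a,b)$). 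In cases (i), (ii), (iv) the scalar curvature $c$ satisfies $\kappa(a)>0$ and the first model applies; in case (iii), $c=c_0$ is precisely the value at which $\kappa(a)$ vanishes (any larger $c$ would make $\varphi<0$ near $\tau=a$, contradicting supremality of $c_0$), and the second or third model applies according as $\kappa'(a)>0$ or $\kappa'(a)=0$.

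Next I would integrate $d\nu/d\tau=1/\varphi$ near $\tau=a$ to read off
\[\nu\sim -\tfrac{2}{\kappa(a)\epsilon},\qquad \nu\sim -\tfrac{3}{\kappa'(a)\epsilon^2},\qquad \nu\sim -\tfrac{8}{\kappa''(a)\epsilon^3},\]
and invert to obtain $\epsilon=-2/(\kappa(a)\nu)$, $\epsilon=\bigl(3/(\kappa'(a)(-\nu))\bigr)^{1/2}$, or $\epsilon=\bigl(8/(\kappa''(a)(-\nu))\bigr)^{1/3}$, respectively. Since $f'(\nu)=\tau=a+\epsilon$, we have $f(\nu)=a\nu+\int\epsilon\,d\nu$, and each of the three expressions for $\epsilon(\nu)$ admits an elementary antiderivative through $\int(-\nu)^{-1}d\nu=-\log(-\nu)$, $\int(-\nu)^{-1/2}d\nu=-2(-\nu)^{1/2}$, and $\int(-\nu)^{-1/3}d\nu=-\tfrac{3}{2}(-\nu)^{2/3}$. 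Plugging $\nu=\log r^2$ reproduces the three claimed leading terms with the stated coefficients $b=2/\kappa(a)$, $2(3/\kappa'(a))^{1/2}$, and $\tfrac{3}{2}(8/\kappa''(a))^{1/3}$.

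The main technical work is controlling the error terms to the advertised order. In the PMY case the next-to-leading coefficient of $\varphi=\tfrac{\kappa(a)}{2}\epsilon^2+c_3\epsilon^3+\cdots$ produces a $\log\epsilon$ term in $\nu(\epsilon)$ upon integration, which after inversion contributes a correction of size $\log(-\nu)/\nu^2$ to $\epsilon(\nu)$; integrating this, an iterative L'H\^opital-style inversion (exactly as carried out following (\ref{70901}) for the Calabi ansatz) bounds the residual error in $f$ by $O((\log r^2)^{-1})$ in the sense of Definition \ref{def}. The same bookkeeping, applied to the two degenerate models of case (iii), yields the $O(\log(-\log r^2))$ and $O((-\log r^2)^{1/2})$ errors respectively. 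I expect no fundamentally new idea beyond the model computations of Section \ref{sec2}; the main obstacle is the careful bookkeeping in the iterated inversion, particularly verifying that the subleading contributions of $\varphi$ do not alter the principal exponent in the $(-\nu)$-asymptotics of $\epsilon(\nu)$.
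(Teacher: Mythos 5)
Your analytic skeleton --- Taylor-expanding $\varphi$ at $\tau=a$ via $P''=\kappa Q$, integrating $d\nu=d\tau/\varphi$, inverting, and integrating $\tau=a+\epsilon(\nu)$ to get $f$ --- is exactly the paper's route, and you carry the final integration/inversion out in more detail than the paper does (it simply says ``the conclusion then follows''). But the substantive content of Proposition \ref{pop} is the determination of the sign of $\kappa(a)$ in each of the four cases, and this is where your proposal has a genuine gap: you simply assert $\kappa(a)>0$ for cases (i), (ii), (iv). Each of these requires its own argument. For (i) one writes $\kappa(a)=\bigl(\kappa(a)|_{c=c_0}\bigr)+(c_0-c)$ and uses $\kappa(a)|_{c=c_0}\geq 0$ (positivity of the limiting profile) together with $c<c_0$. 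For (ii) ($c=c_0=0$) positivity of $\varphi$ only gives $\kappa(a)\geq 0$ a priori; one must rule out $\kappa(a)=0$ by computing that $\kappa(a)=0$ forces $\kappa'(a)=-\tfrac{n(n-1)}{(1+\lambda a)a^2}<0$, contradicting the nonnegativity of the leading cubic Taylor coefficient. For (iv) one needs the root-counting argument: $\varphi(a)=\varphi'(a)=\varphi(b)=\varphi'(b)=0$ already forces two roots of $\kappa$ inside $(a,b)$, and since $\kappa(\tau)Q(\tau)$ is (up to the factor $Q>0$) a quadratic in disguise, $\kappa(a)\neq 0$. None of this is automatic, and without it the trichotomy of models you set up cannot be assigned to the cases.

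The case (iii) justification is also stated backwards. You write that ``any larger $c$ would make $\varphi<0$ near $\tau=a$, contradicting supremality of $c_0$'' --- but a larger $c$ failing to give $\varphi>0$ \emph{confirms} supremality rather than contradicting it. The correct argument (the paper's) is the contrapositive: if $\kappa(a)|_{c_0}>0$, then one could increase $c_0$ by a small $\epsilon$ and keep $\varphi>0$ on \emph{all} of $(a,\infty)$, contradicting that $c_0=\sup\mathfrak C$. Making this work requires global control, not just control near $a$: one needs $\kappa(\tau)\geq C>0$ on some $[a_0,\infty)$ (which uses $c_0<0$, i.e.\ the hypothesis of case (iii)) and a uniform positive lower bound for $\varphi$ on the compact middle interval $[a_1,a_0]$, so that the perturbation in $c$ does not destroy positivity away from $a$. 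Your one-line parenthetical supplies neither the correct logical direction nor this global estimate. Finally, in the degenerate subcase of (iii) you should also record why $\kappa'(a)\geq 0$ (positivity of $\varphi$ again) and why $\kappa''(a)\neq 0$ when $\kappa'(a)=0$, so that the quartic model is genuinely the terminal case.
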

\begin{proof}
For cases (i), (ii) and (iv), we claim that $\kappa(a)>0$.
If the claim holds, then as $r^2\to 0$,
$$\frac{d\tau}{d\nu}=\varphi(\tau)= \frac{\kappa(a)}{2}(\tau-a)^2+O(\frac{1}{\tau-a}),$$
 from which we can get
$$f(r^2)= a\log r^2-\frac{2}{\kappa(a)}\log(-\log r^2)+O((\log r^2)^{-1}),$$
which means that the metric is with PMY asymptotic property.

We prove the claim. For case (i), since $\frac{c_M}{1+\lambda a}+\frac{n(n-1)}{a}-c_0\geq 0$, $\kappa(a)=(\kappa(a)+c-c_0)+(c_0-c)>0$.
 For case (ii), if $\kappa(a)=0$, then $\varphi^{(3)}(a)=\kappa'(a)$ and $\varphi(\tau)$ has the Taylor expansion at $\tau=a$:
$$\varphi(\tau)=\frac{\kappa'(a)}{3!}(\tau-a)^3+o((\tau-a)^3).$$
The positivity of $\varphi(\tau)$ when $\tau>0$ implies $\kappa'(a)\geq 0$. On the other hand, if $\kappa(a)=0$, i.e., $\frac{c_M}{1+\lambda a}+\frac{n(n-1)}{a}=0$ as $c=0$, then
$$\kappa'(a)=\frac{-c_M}{(1+\lambda a)^2}-\frac{n(n-1)}{a^2}=-\frac{n(n-1)}{(1+\lambda a)a^2}<0,$$
which is a contradiction to $\kappa'(a)\geq 0 $. Hence $\kappa(a)\not=0$ and  $\kappa(a)>0$ is deduced from the positivity of $\varphi(\tau)$.
  We then consider case (iv). In this case, $I=(a,b)$ and
  $$\varphi(a)=\varphi'(a)=\varphi(b)=\varphi'(b)=0.$$
These equalities guarantee that there are  	already two roots in (a,b) for $\varphi''(\tau)=\kappa(\tau)Q(\tau)=0$. Hence $\kappa(a)=\varphi''(a)\not=0$. The positivity of $\varphi$ then implies $\kappa(a)>0$.

For case (iii), we first prove that $\kappa(a)=0$ which means that the metrics for this case is not PMY. Since $c=c_0<0$, there exist  constants $a_0\in(a,\infty)$ and  $C>0$ such that $\kappa(\tau)\geq C$ in $[a_0,\infty)$. If $\kappa(a)>0$,  there would exist constants $a_1>a$ and $C_1>0$ such that $\kappa(\tau)>C_1$ in $(a,a_1)$ and $\varphi(\tau)>C_1$ in $(a_1,a_0]$. Then we could choose a positive constant $\epsilon$ such that $\varphi(\tau)$ is still positive by replacing  $c=c_0$ with $ c=c_0+\epsilon$. Hence $c=c_0+\epsilon\in\mathfrak C$ which contradicts to that $c_0$ is the supermum of $\mathfrak C$.

In this case, there are two subcases which should be considered: $\kappa'(a)>0$ and $\kappa'(a)=0$: If $\kappa'(a)>0$,
$$f''(\nu)=\frac{d\tau}{d\nu}=\varphi(\tau)=\frac{\kappa'(a)}{3!}(\tau-a)^3+O((\tau-a)^4);$$
If $\kappa'(a)=0$, $\kappa''(a)\not=0$ and
$$f''(\nu)=\frac{d\tau}{d\nu}=\varphi(\tau)= \frac{\kappa''(a)}{4!}(\tau-a)^4+O((\tau-a)^5).$$
The conclusion then follows.
\end{proof}

\begin{proposition}\label{poop}
The metrics in Proposition \ref{prop3} with constant scalar curvature $c$ have the asymptotic property:

1. For cases (i) and (iii) (hence defined on $\mathbb U^\ast$), as $r^2\to 1$,  	
$$f(r^2)=-\frac{(m+n)(m+n+1)}{c} \log(-\log r^2)+O(\log r^2);$$

2. For case (ii) (hence defined on $ E^\ast$), as $r^2\to\infty$,
\begin{equation*}
		f(r^2)= \frac{1}{\theta_1} (r^2)^{\theta_1}+\theta_2 \log r^2+O(r^{-2})
		\end{equation*}
with $ \theta_1= \frac{c_M+n(n-1)\lambda}{\lambda(m+n)(m+n+1)}$ and $\theta_2=\frac{(m+n)(c_M (m-1)+n(n-1)m\lambda)}{m(m+n-2)(c_M+n(n-1)\lambda)}$;

3. For case (iv) (hence defined on $E^\ast$), as $r^2\to \infty$,
\begin{equation*}\label{b*b}
  f(r^2)= b\log r^2-\frac{2}{\kappa(b)}\log(\log r^2)+O((\log r^2)^{-1})\qquad  \text{with}\ \  \kappa(b)>0.
\end{equation*}
\end{proposition}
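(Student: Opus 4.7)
The plan is, in each case, to read off the leading behaviour of the momentum profile $\varphi(\tau)=P(\tau)/Q(\tau)$ at the right endpoint of the interval $I$, and then propagate it through $d\nu/d\tau=1/\varphi$ and $f'(\nu)=\tau$ to obtain the expansion of $f(r^2)$. The integration constants in $\nu$ were already fixed in Proposition \ref{prop3}, so the formulas for $f(\log r^2)$ are uniquely determined once these asymptotic expansions are produced.

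For Part 1 (cases (i) and (iii), metric on $\mathbb U^\ast$, $b=+\infty$), I would extract the leading coefficient of $\varphi(\tau)$ as $\tau\to\infty$. Since $\kappa(x)=\frac{c_M}{1+\lambda x}+\frac{n(n-1)}{x}-c\to -c$ and $Q(x)\sim \lambda^m x^{m+n-1}$, a direct computation of the double integral $P(\tau)=\int_a^\tau(\tau-x)\kappa(x)Q(x)\,dx$ gives $\varphi(\tau)\sim \frac{-c}{(m+n)(m+n+1)}\tau^2$. Integrating $d\nu/d\tau=1/\varphi$ (with constant chosen so that $\nu(\infty)=0$) yields $\nu(\tau)\sim \frac{(m+n)(m+n+1)}{c\tau}$; inverting to $\tau(\nu)$ and integrating $f'=\tau$ once more produces the claimed logarithmic behaviour, with the $O(\log r^2)$ remainder controlled by retaining one further term in each expansion. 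Part 2 (case (ii), $c=0$) is structurally similar but now $\deg P-\deg Q=1$, so $\varphi(\tau)\sim\theta_1\tau$ is linear rather than quadratic. I would expand $\kappa(x)Q(x)=c_M(1+\lambda x)^{m-1}x^{n-1}+n(n-1)(1+\lambda x)^m x^{n-2}$ and $Q(\tau)$ keeping the top two terms of each, arriving at $\varphi(\tau)=\theta_1\tau+(\text{const})+O(\tau^{-1})$. Then $\nu(\tau)=\theta_1^{-1}\log\tau+O(1)$, so $\tau(\nu)=(r^2)^{\theta_1}\cdot(\text{slow factor})$, and integrating $f'=\tau$ produces the principal term $\theta_1^{-1}(r^2)^{\theta_1}$, the subleading $\theta_2\log r^2$, and the residual $O(r^{-2})$. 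The precise values of $\theta_1$ and $\theta_2$ come from matching the two highest-degree coefficients in the expansions of $P$ and $Q$.

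For Part 3 (case (iv), finite $b$), equation (\ref{70201}) rewrites as $P''(\tau)=\kappa(\tau)Q(\tau)$, and evaluating at $\tau=b$ using $\varphi(b)=\varphi'(b)=0$ gives $\varphi''(b)=\kappa(b)$. Positivity $\kappa(b)>0$ follows from two complementary arguments: $\kappa(b)<0$ would force $\varphi<0$ just to the left of $b$, contradicting $\varphi>0$ on $(a,b)$; and $\kappa(b)=0$ would allow one to slightly raise $c$ above $c_0$ and still keep $\varphi>0$ on $(a,\infty)$, contradicting the definition of $c_0$ as the supremum of $\mathfrak C$. With $\kappa(b)>0$, the expansion $\varphi(\tau)\sim\tfrac12\kappa(b)(\tau-b)^2$ as $\tau\to b^-$ mirrors the behaviour at $\tau=a$ treated in Proposition \ref{pop}; the two-step integration, approached from below so that $\nu\to+\infty$, then yields $\tau(\nu)\sim b-\tfrac{2}{\kappa(b)\nu}$ and, after a last integration, the stated expansion for $f(\log r^2)$.

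The main obstacle I anticipate is Part 2, in which obtaining the exact coefficient $\theta_2$ requires propagating the next-to-leading term of $\varphi(\tau)$ through the nonlinear inversion $\tau\mapsto\tau(\nu)$ (which here is exponential rather than rational) and then through the final integration, and keeping the $O(r^{-2})$ error rigorous after this exponential inversion is the most delicate piece of bookkeeping. Part 3 is conceptually easier once the auxiliary positivity $\kappa(b)>0$ is secured, and this positivity is the one genuinely new ingredient beyond the methods already established in the proof of Proposition \ref{pop}.
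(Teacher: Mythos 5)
The paper gives no proof of this proposition (it is dismissed as ``a calculus exercise''), and your overall strategy --- read off the behaviour of $\varphi=P/Q$ at the right endpoint of $I$, integrate $d\nu/d\tau=1/\varphi$, invert, and integrate $f'(\nu)=\tau$ --- is exactly the computation the authors intend and the one they carry out explicitly in the analogous one--dimensional setting of Section \ref{sec2}. Parts 1 and 2 are fine as a plan, with two bookkeeping caveats: carrying your Part 1 computation through gives $f\sim\frac{(m+n)(m+n+1)}{c}\log(-\log r^2)$, the \emph{negative} of the displayed coefficient (with $c<0$ this is the sign for which $f\to+\infty$, so the printed statement appears to carry a sign typo inherited from Theorem \ref{thm2}); and in Part 2 the leading term of $P$ comes from $\int_a^\tau(\tau-x)x^{m+n-2}\,dx\sim\tau^{m+n}/((m+n-1)(m+n))$, so your method produces $\theta_1=\frac{c_M+n(n-1)\lambda}{\lambda(m+n-1)(m+n)}$ rather than the stated denominator $(m+n)(m+n+1)$. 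Neither affects the validity of your method, but you should not expect to match the printed constants verbatim.

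The one genuine gap is your argument that $\kappa(b)\neq 0$ in Part 3. Ruling out $\kappa(b)<0$ via $\varphi(\tau)=\tfrac12\kappa(b)(\tau-b)^2+\cdots$ is correct, but the perturbation argument for $\kappa(b)=0$ runs the wrong way: since $\partial\kappa/\partial c=-1$, increasing $c$ \emph{decreases} $P$ pointwise, so raising $c$ above $c_0$ can only destroy positivity of $\varphi$, never preserve it (and in case (iv) the relevant positivity is on $(a,b)$, not $(a,\infty)$). Nor is $\kappa(b)=0$ excluded by sign considerations alone, because a zero of odd order at $b$ with $\varphi'''(b)<0$ would still leave $\varphi>0$ on $(a,b)$. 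The correct argument is the root count the paper already uses in the proof of Proposition \ref{pop}, case (iv): $P''=\kappa Q$, and $\kappa(\tau)=0$ is equivalent to a quadratic in $\tau$, hence has at most two positive roots; the four conditions $P(a)=P'(a)=P(b)=P'(b)=0$ force, by Rolle, at least two zeros of $P''$ strictly inside $(a,b)$, so $\kappa$ cannot also vanish at $b$. Combining $\kappa(b)\geq 0$ (from positivity of $\varphi$ on $(a,b)$) with $\kappa(b)\neq 0$ gives $\kappa(b)>0$, and the rest of your Part 3 goes through.
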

\begin{proof}
We omit the proof here. It is a calculus exercise.
\end{proof}

{\bf 2. Case $\lambda=0$.}
In this case,
$\kappa(\tau)=c_M+\frac{n(n-1)}{\tau}-c$. Hence $c_0=c_M$.
\begin{proposition}\label{prpmy}
Given constants $c_M$, $\lambda=0$, and $a>0$, there exists a complete cscK metrics on $\mathbb U^\ast$ with $c<c_M$ and on $E^\ast$ with $c=c_M$. All these metrics admit PMY asymptotic property.
\end{proposition}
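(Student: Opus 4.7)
The plan is to specialize the general momentum construction of Section \ref{momc} to $\lambda=0$, where $Q(\tau)=\tau^{n-1}$ and the scalar-curvature equation (\ref{70201}) becomes
\begin{equation*}
c=c_M+\frac{n(n-1)}{\tau}-\frac{1}{\tau^{n-1}}\bigl(\tau^{n-1}\varphi(\tau)\bigr)''.
\end{equation*}
Completeness near the zero section forces $a>0$ and $\varphi(a)=\varphi'(a)=0$ by Lemma \ref{070501}, so formula (\ref{70401}) reduces to $\varphi(\tau)Q(\tau)=P(\tau)$ with
\begin{equation*}
P(\tau)=\int_a^\tau (\tau-x)\,\kappa(x)\,x^{n-1}\,dx,\qquad \kappa(x)=c_M-c+\frac{n(n-1)}{x}.
\end{equation*}

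First I would observe that whenever $c\leq c_M$ we have $\kappa(x)>0$ for all $x>0$, hence $P(\tau)>0$ on $(a,+\infty)$, so $\varphi$ is positive on the entire ray $(a,+\infty)$ and therefore $b=+\infty$. The distinction between the two claimed cases is then read off from the degree difference $\wp=\deg P-\deg Q$: when $c<c_M$ the integrand of $P$ has leading behaviour $(c_M-c)\,x^n$, giving $\wp=2$, so by (\ref{ttau}) the function $\nu(\tau)$ has a finite limit as $\tau\to+\infty$, which we normalize to $0$; the metric is thus defined on $\mathbb U^\ast$. When $c=c_M$, the leading behaviour of the integrand is $n(n-1)x^{n-2}$, giving $\wp=1$ and $\nu(\tau)\to+\infty$, so the metric is defined on $E^\ast$. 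In either case Lemma \ref{070501} applies with $b=+\infty$ and yields the completeness of $\omega$ at the far end automatically; completeness at the zero section is built in by the choice $\varphi(a)=\varphi'(a)=0$.

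For the PMY asymptotic behaviour near $r^2=0$, the point is to verify the positivity hypothesis $\kappa(a)>0$ that drives the argument of Proposition \ref{pop}. Here $\kappa(a)=c_M-c+n(n-1)/a$, which is strictly positive both when $c<c_M$ (both summands are non-negative and the second is strictly positive) and when $c=c_M$ (the first summand vanishes, but $n(n-1)/a>0$). Since $\varphi(a)=\varphi'(a)=0$ and $\varphi''(a)=\kappa(a)>0$, a Taylor expansion gives $\varphi(\tau)=\tfrac{\kappa(a)}{2}(\tau-a)^2+O((\tau-a)^3)$ near $\tau=a$. Integrating $d\tau/d\nu=\varphi(\tau)$ and then $df/d\nu=\tau$ as in Proposition \ref{pop}, and using $\nu=\log r^2$, one obtains
\begin{equation*}
f(r^2)=a\log r^2-\frac{2}{\kappa(a)}\log(-\log r^2)+O((\log r^2)^{-1}),
\end{equation*}
which is exactly the PMY asymptotic property of Definition \ref{def}.

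The entire argument is essentially a specialization of Propositions \ref{prop3} and \ref{pop} with no genuine new obstacle; the only slightly delicate point is to notice that when $c=c_M$ the leading term of $\kappa$ disappears but the subleading $n(n-1)/x$ term is enough to keep $P>0$ globally and to keep $\kappa(a)>0$, so the PMY expansion persists and the metric lives on the full $E^\ast$ rather than only on $\mathbb U^\ast$. Numerical illustrations of $\varphi$ in each case could be included as in Example \ref{ee}, but they are not required for the proof.
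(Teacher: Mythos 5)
Your argument is correct and is exactly the route the paper takes: the paper's proof of Proposition \ref{prpmy} simply says it is the same as Propositions \ref{prop3} and \ref{pop}, after noting that for $\lambda=0$ one has $\kappa(\tau)=c_M+\frac{n(n-1)}{\tau}-c$ and hence $c_0=c_M$. Your write-up just makes explicit the specialization (positivity of $P$ for $c\leq c_M$, the degree count $\wp=2$ versus $\wp=1$ deciding $\mathbb U^\ast$ versus $E^\ast$, and $\kappa(a)>0$ giving the PMY expansion), which the paper leaves implicit.
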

\begin{proof}
The proof is the same as Propositions \ref{prop3} and \ref{pop}.
\end{proof}

 {\bf 3.  Case $\lambda<0$.}
The method of this case is quite different from the cases $\lambda\geq0$.
\begin{proposition}\label{ppppp}
For any $\lambda<0$ and  $c_M>0$, there exists on $E^\ast$ a complete positive cscK metric with PMY asymptotic property.
\end{proposition}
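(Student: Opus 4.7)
The plan is to stay within the momentum-profile setup of Section \ref{sec3} and seek a rational momentum profile $\varphi(\tau) = P(\tau)/Q(\tau)$ with $P$ given by formula (\ref{70401}), for some $0 < a < b < -1/\lambda$ satisfying $\varphi(a) = \varphi'(a) = 0$, $\varphi(b) = \varphi'(b) = 0$, and $\varphi > 0$ on $(a,b)$. Because $b$ is finite and both endpoints are double zeros, Lemma \ref{070501} will then yield a complete cscK metric on $E^\ast$, and the PMY asymptotic at the zero section will follow from the Taylor expansion $\varphi(\tau) = \tfrac12 \kappa(a)(\tau-a)^2 + O((\tau-a)^3)$ just as in case (i) of Proposition \ref{pop}.

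First I would analyze the sign structure of $\tilde\kappa(\tau) := c_M/(1+\lambda\tau)+n(n-1)/\tau$ on $(0, -1/\lambda)$. Under $\lambda<0$ and $c_M>0$ both summands are strictly positive and blow up at the two endpoints, and $\tilde\kappa$ is strictly convex; hence $\tilde\kappa$ has a unique interior minimum $m_0 > 0$. With $\kappa = \tilde\kappa - c$, the double-root condition at $b$ (applied to $P$ as in (\ref{70401})) is equivalent to the pair
\begin{equation*}
\int_a^b \kappa(x)\, Q(x)\, dx = 0 , \qquad \int_a^b (b-x)\, \kappa(x)\, Q(x)\, dx = 0 .
\end{equation*}
The first identity forces $\kappa$ to change sign on $(a,b)$, which in turn forces $c > m_0 > 0$; positivity of the scalar curvature is therefore automatic, and in particular matches the conclusion of the proposition.

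The main obstacle is producing a triple $(a, b, c)$ satisfying both integral identities while keeping $0 < a < \tau_1(c)$ and $\tau_2(c) < b < -1/\lambda$, where $\tau_1(c) < \tau_2(c)$ denote the two zeros of $\kappa(\cdot;c)$. My proposed route is a continuity argument. Fix $a \in (0, \tau^\ast)$; for $c$ just above $m_0$ the integral $\int_a^{-1/\lambda}\kappa(x)Q(x)\,dx$ is strictly positive, whereas for $c$ large it is strictly negative. By IVT there is an intermediate $c$ for which $P'$ acquires a first zero $b(c) \in (\tau_2(c), -1/\lambda)$; treating this $b(c)$ as an implicitly defined function and then varying the pair $(a,c)$ again, one can arrange $P(b(c)) = 0$ by a second IVT. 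A degree or winding-number argument for the plane map $(b,c) \mapsto (P(b), P'(b))$ would equivalently supply the simultaneous solution.

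Positivity of $\varphi$ on $(a,b)$ follows from the sign structure of $P'' = \kappa Q$, which is positive on $(a,\tau_1)\cup(\tau_2,b)$ and negative on $(\tau_1,\tau_2)$. Combined with $P(a)=P'(a)=P(b)=P'(b)=0$, this forces $P'$ to have exactly one zero $\tau_0 \in (\tau_1,\tau_2)$, with $P$ increasing on $(a,\tau_0)$ and decreasing on $(\tau_0,b)$, so $P>0$ on $(a,b)$. The PMY asymptotic at $\tau=a$ is then immediate from $\varphi''(a) = \kappa(a) = \tilde\kappa(a)-c > 0$ (since $a<\tau_1(c)$), yielding $f(r^2) = a\log r^2 - \frac{2}{\kappa(a)}\log(-\log r^2) + O((\log r^2)^{-1})$ exactly as in Proposition \ref{pop}, and completing the construction.
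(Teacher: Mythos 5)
Your overall strategy coincides with the paper's: work inside $(0,-1/\lambda)$ where $Q>0$, impose double zeros of $P$ at both ends $a$ and $b$, deduce positivity of $\varphi$ on $(a,b)$ from the fact that $P''=\kappa Q$ has at most two zeros there, get completeness from Lemma \ref{070501}, and get PMY from $\kappa(a)>0$. Those surrounding pieces of your argument are correct (and your observation that $\int_a^b\kappa Q=0$ forces $c$ above the interior minimum of $\tilde\kappa$, hence $c>0$ automatically, is a nice shortcut for the positivity of the scalar curvature, which the paper instead derives from positivity of $H_3$ in (\ref{HHH})). The problem is that the heart of the proposition --- the actual existence of parameters with $P(b)=P'(b)=0$ --- is exactly the step you leave as a sketch, and the sketch has a genuine gap. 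Your first IVT on $G(c)=\int_a^{-1/\lambda}\kappa Q\,dx$ does not by itself produce a zero of $P'$ in $(\tau_2(c),-1/\lambda)$: when $G(c)<0$ the only zero of $P'$ after $a$ lies in $(\tau_1,\tau_2)$ where $\kappa<0$, and when $G(c)>0$ a zero in $(\tau_2,-1/\lambda)$ exists only if in addition $P'(\tau_2)<0$, which holds near the transition value $c^*$ but must be argued. More seriously, the second IVT --- that $P(b(c))$ changes sign as $(a,c)$ varies --- is simply asserted; at the lower end of the admissible $c$-range one has $P(b(c))>0$ (since $P'\ge 0$ up to $b$ there), but you give no regime in which $P(b(c))<0$, and the alternative ``degree or winding-number argument'' is named rather than executed. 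Since the whole proposition reduces to this existence statement, the proof is incomplete as written.

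The paper closes this gap differently and more cleanly: because $P$ in (\ref{70401}) depends affinely on the two constants $c_M$ and $c$, the two conditions $P(b)=P'(b)=0$ can be \emph{solved} for $(c_M,c)$ as explicit ratios of the integrals $H_1,H_2,H_3$ in (\ref{HHH}); positivity of the $H_i$ on $0<a<b<-1/\lambda$ (H\"older) shows the resulting $c_M$ and $c$ are positive, and a one-parameter IVT along the curves $b=2a$, $a\to0^+$ (where $H_1/H_2\to+\infty$) and $a,b\to(-1/\lambda)^-$ (where $H_1/H_2\to0$) realizes every prescribed $c_M>0$. If you want to keep your fixed-$(c_M,a)$ viewpoint, you would need to supply the missing sign analysis of $P(b(c))$ (most likely by also moving $a$, as you anticipate); otherwise the linear-algebra-plus-one-parameter-IVT route of the paper is the efficient way to finish.
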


\begin{proof}
We need to prove that there exist  two  constants $a$ and $b$ with  $0<a<b<-\frac 1 \lambda$ such that the function $\varphi(\tau)$ is positive on domain  $(a,b)$ and $\varphi(b)=\varphi'(b)=0$.

 On interval $(0,-\frac 1 \lambda)$, the polynomial $Q(\tau)$ is positive. Hence there is a number $b\in(a,-\frac 1 \lambda)$ such that $\varphi(b)=\varphi'(b)=0$ if and only if $P(b)=P'(b)=0$, and when $\tau\in (a,b)$, $\varphi(\tau)$ is positive if and only if $P(\tau)$ is positive.

From $P(b)=P'(b)=0$, we can solve $c_M$ and $c$ as
\begin{equation}\label{cc}
c_M=n(n-1)\frac{H_1(a,b)}{H_2(a,b)}\qquad \textup{and}\qquad c=n(n-1)\frac{H_3(a,b)}{H_2(a,b)},
\end{equation}
where we have defined
\begin{equation}\label{HHH}
\begin{aligned}
&H_1(a,b)=\int^b_a\frac{Q(x)}{x}dx\int_a^bxQ(x)dx-\Bigl(\int_a^bQ(x)dx\Bigr)^2,\\
&H_2(a,b)=-\frac 1 \lambda \int^b_a\frac{Q(x)}{1+\lambda x}dx\int_a^b (1+\lambda x)Q(x)dx+\frac 1 \lambda \Bigl(\int_a^bQ(x)dx\Bigr)^2,\\
&H_3(a,b)=\int^b_a\frac{xQ(x)}{1+\lambda x}dx\int_a^b\frac{Q(x)}{x}dx-\int_a^bQ(x)dx\int_{a}^b\frac{Q(x)}{1+\lambda x}dx.
\end{aligned}
\end{equation}
We first note that when $\lambda<0$ and $0<a<b<-\frac 1 \lambda$, the functions $H_i(a,b)$ for $i=1,2,3$ are always positive. The proofs for the first and second functions are direct by the H\"older inequality. The proof for the third one is also direct by using the common techniques in calculus.
Thus the constant $c_M$ and $c$ defined in (\ref{cc}) are  indeed positive.

We need the following.
\vspace{1mm}

\noindent {\bf Claim:} For any given positive $c_M$, there exist two constants $a$ and $b$ with $0<a<b<-\frac 1 \lambda$ such that the first equality in (\ref{cc}) holds.
\begin{proof}
Define a function
$$ H(\zeta,\tau)=\frac{H_1(\zeta,\tau)}{H_2(\zeta,\tau)},\qquad \zeta, \tau\in(0,-\frac 1 \lambda),\ \zeta<\tau.$$
By continuity, if  we can prove that as $\zeta\to 0$, $H(\zeta, 2\zeta)\to\infty$, and as $\epsilon\to 0$, $H(\frac{1-2\epsilon}{-\lambda},\frac{1-\epsilon}{-\lambda})\to 0$, then the claim holds. But as $\zeta\to 0$ and (hence) $(1+\lambda\zeta)\to 1$,  one can easily estimate to get $H_1(\zeta,2\zeta)=O(\zeta^{2n})$ and $H_2(\zeta,2\zeta)=O(\zeta^{(2n+1)})$, and hence $H(\zeta,2\zeta)=O(\zeta^{-1})$. On the other hand,
as $\epsilon\to 0$, $H_1(\frac{1-2\epsilon}{-\lambda},\frac{1-\epsilon}{-\lambda})=O(\epsilon^{2m+1})$, and $H_2(\frac{1-2\epsilon}{-\lambda},\frac{1-\epsilon}{-\lambda})=O(\epsilon^{2m})$ and hence $H(\frac{1-2\epsilon}{-\lambda},\frac{1-\epsilon}{-\lambda})=O(\epsilon)$.
\end{proof}

According to the claim, we  have $P(b)=P'(b)=0$. The condition  $P(\tau)>0$ for $\tau\in (a,b)$ is automatically satisfied. For if there exists a point $\xi\in(a,b)$ such that $P(\xi)=0$, equation $P'''(\tau)=\kappa(\tau)Q(\tau)=0$ has three roots in $(a,b)$. This is impossible.

From the proof of Proposition \ref{pop}, we see that if $\kappa(a)>0$, then the metric has the PMY asymptotic property. Since $P(a)=P'(a)=0$, $\kappa(a)=0$ is equivalent to $P''(a)=0$, and hence  $P'''(\tau)=0$ has three roots in $(a,b)$. It is impossible.
\end{proof}

\begin{proof}[Proof of Theorem \ref{thmlag0}]
It follows from Propositions \ref{prop3}, \ref{pop}, \ref{poop}, \ref{prpmy} and \ref{ppppp}.
\end{proof}

\section{CscK PMY metrics on $\mathbb P(E\oplus \mathcal O )-M$}\label{SPEC}
Recall that $\mathbb P (E\oplus\mathcal O)$ can be viewed as a compactification of $E$: $E$ can be imbedded into $\mathbb P(E\oplus \mathcal O)$. In fact, let $(U,z=(z_1,\cdots,z_m))$ be a local holomorphic chart of $M$ such that $E|_U$ is (holomorphically) isomorphic to $U\times \mathbb C^n$. If we denote the coordinates of $\mathbb C^n$ as $w=(w_1,\cdots,w_n)$, the imbedding map can be defined as follows: for any $p\in E|_q$, $q\in U$,
$$p\mapsto (q, w_1(p),\cdots, w_n(q))\mapsto (q, [1,w_1(p),\cdots, w_n(p)]).$$
 This map is clearly well-defined on $E$. It defines a section $s$ of $\mathbb P(E\oplus\mathcal O)$:
 $$q\mapsto (q,(0,\cdots, 0))\mapsto (q,[1,0\cdots, 0])$$
which is just the zero section of $E$. Hence we still denote $s(M)$ simply by $M$. Set $D_{\infty}=\mathbb P(E\oplus \mathcal O)-E$. $D_\infty$ is a divisor on $\mathbb P(E\oplus\mathcal O)$ and is called the infinity divisor. By these notations,
$E-M$ is bi-holomorphic to $\mathbb P(E\oplus \mathcal O)-s(M)-D_\infty$. Now the question is when the metric $\omega$ defined on $E-M$ as the above section can be extended across $D_\infty$.

First note that if $\omega$ can be extended across $D_\infty$, $\omega$ must be defined on $E-M$ and is not complete at infinity. Hence according to the proof of Lemma \ref{070501}, the endpoint $b$ of $I=(a,b)$  is finite.

\begin{lemma}\label{lem4}
Let $\omega$ be the bundle adapted metric with momentum profile $\varphi(t)$ in (\ref{70401}). Assuming that there is a constant $b$ such that $\varphi(\tau)$ is positive on 	$(a,b)$ and $\varphi(b)=0$. Then $\omega$ defined on $E-M$ can be extended across $D_\infty$ if and only if $\varphi'(b)=-1$.
\end{lemma}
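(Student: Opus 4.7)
The plan is to work in local coordinates around a generic point $p_\infty \in D_\infty$ and read off the extendability condition directly from the leading behaviour of the momentum profile at $\tau = b$.

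First I fix coordinates. Near $q_0 = \pi(p_\infty) \in M$ trivialize $E$ so that $E \cong U \times \mathbb{C}^n$ with fiber coordinates $w = (w_1,\ldots,w_n)$, arranged so that $p_\infty$ corresponds to the fiber direction $[1:0:\cdots:0]$. Pass to the standard affine chart $\tilde w_1 = 1/w_1$, $\tilde w_\alpha = w_\alpha/w_1$ ($\alpha \geq 2$), in which $D_\infty$ is locally $\{\tilde w_1 = 0\}$. A direct substitution gives
\begin{equation*}
t = -\log|\tilde w_1|^2 + \log(1+\tilde r^2), \qquad \nu = -\log|\tilde w_1|^2 + G,
\end{equation*}
with $\tilde r^2 = \sum_{\alpha \geq 2}|\tilde w_\alpha|^2$ and $G = \log h(z) + \log(1+\tilde r^2)$ smooth on the chart. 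For the expansion of $f$ at $\nu = +\infty$, note that if $\varphi'(b) = 0$ the metric is complete at infinity by Lemma \ref{070501}, so $D_\infty$ is at infinite distance and $\omega$ cannot be extended; thus one may assume $\alpha := -\varphi'(b) > 0$. Setting $\mu = b - \tau$, the ODE $d\tau/d\nu = \varphi(\tau)$ becomes $d\mu/d\nu = -\alpha\mu + O(\mu^2)$, whose solution admits a convergent expansion
\begin{equation*}
\mu(\nu) = \sum_{k \geq 1} a_k e^{-k\alpha\nu}, \qquad a_1 > 0,
\end{equation*}
by a standard regular-singular ODE argument, so $f(\nu) = b\nu + g(\nu)$ with $g(\nu)$ a convergent series in $e^{-\alpha\nu}$ up to a constant.

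For necessity I compute the diagonal component $\omega(\partial_{\tilde w_1},\partial_{\bar{\tilde w}_1})$ at $p_\infty$. Along the slice $\{\tilde w_\alpha = 0,\ \alpha \geq 2\}$ one has $e^{-t} = |\tilde w_1|^2$ and $dw_1 = -\tilde w_1^{-2}d\tilde w_1$; off-diagonal contributions vanish on the slice by symmetry, so only the term $f''(\nu)\, \sqrt{-1}\partial t \wedge \bar\partial t$ survives, yielding
\begin{equation*}
g_{\tilde w_1 \bar{\tilde w}_1}\bigl|_{\tilde w_\alpha = 0} = f''(\nu)\,e^{-t}\,|\tilde w_1|^{-4} = \frac{\varphi(\tau)}{|\tilde w_1|^2} = \alpha a_1\, |\tilde w_1|^{2(\alpha-1)} e^{-\alpha G(z,0)} + O\bigl(|\tilde w_1|^{4\alpha-2}\bigr).
\end{equation*}
For this to extend smoothly to a positive, finite value at $\tilde w_1 = 0$, the exponent $2(\alpha-1)$ must vanish, forcing $\alpha = 1$, i.e. $\varphi'(b) = -1$. (If $\alpha > 1$ is an integer the coefficient vanishes and $\omega$ degenerates on $D_\infty$; for $\alpha \in (0,1)$ or $\alpha$ noninteger, the expression is either singular or not smooth in $(\tilde w_1, \bar{\tilde w}_1)$.)

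For sufficiency, with $\alpha = 1$ the function $\xi := e^{-\nu} = |\tilde w_1|^2 e^{-G}$ is smooth on the chart and vanishes exactly to first order along $D_\infty$, so the series above realize $\mu(\nu)$ and $g(\nu)$ as honest smooth functions of $(\tilde w_1, \bar{\tilde w}_1, \tilde w_2,\ldots, z, \bar z)$ near $\{\tilde w_1 = 0\}$. Writing $f(\nu) = -b\log|\tilde w_1|^2 + bG + g(\nu)$ and using that $-b\log|\tilde w_1|^2$ is pluriharmonic off $D_\infty$, one obtains
\begin{equation*}
\omega = \pi^*\omega_M + \sqrt{-1}\partial\bar\partial\bigl(bG + g(\nu)\bigr)
\end{equation*}
on the chart minus $D_\infty$, and the right-hand side is a smooth closed $(1,1)$-form across $\tilde w_1 = 0$. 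Positivity at $D_\infty$ is immediate from the necessity computation with $\alpha = 1$: the radial component becomes the positive function $a_1/h(z)$, while the remaining diagonal blocks (computed similarly in the $\tilde w_\alpha$ and base directions) reduce respectively to $b/(1+\tilde r^2)$ and $(1+\lambda b)\,\pi^*\omega_M$, both positive once $1 + \lambda b > 0$ (which is built into our momentum construction). The main obstacle is the ODE step: rigorously upgrading the formal series solution of the regular-singular equation at $\nu = \infty$ to an actual smooth function of $e^{-\alpha\nu}$, which is what converts the leading-order necessity computation into a true sufficiency statement and is where the rational-function structure of $\varphi(\tau)$ gets used.
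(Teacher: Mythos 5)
Your argument is correct and follows essentially the same route as the paper: both proofs come down to the observation that in the inverted fiber coordinate one has $\tilde r^2=1/r^2\sim (b-\tau)^{1/\alpha}$ with $\alpha=-\varphi'(b)$, so the metric coefficient $\varphi(\tau)/\tilde r^2$ has a finite positive limit on $D_\infty$ exactly when $\alpha=1$. Two remarks. First, the paper reduces to the single fiber via the identification $\mathbb{CP}^n\setminus\{\mathrm{pt}\}\cong\mathcal O(1)\to\mathbb{CP}^{n-1}$ and writes $\omega_0=f_1(\tilde r^2)\omega_{FS}+f_2(\tilde r^2)\sqrt{-1}\partial\bar\partial\tilde r^2$, whereas you compute all blocks in an affine chart of the total space; your version is more explicit about the base and mixed directions and also states the necessity direction more carefully than the paper does (there it is left implicit in the requirement $f_2(0)=1$). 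Second, the step you flag as ``the main obstacle'' --- upgrading the formal series $\mu(\nu)=\sum a_ke^{-k\alpha\nu}$ to an honest smooth function --- is not actually needed and is closed in the paper by a one-line device: since $\varphi$ is rational with $\varphi(b)=0$, $\varphi'(b)=-1$, write $\varphi(\tau)=(b-\tau)\bigl(1+(b-\tau)\varphi_1(\tau)\bigr)$, so that $\nu=\int\varphi^{-1}d\tau=-\log(b-\tau)-\varphi_2(\tau)$ with $\varphi_2$ smooth and $\varphi_2(b)=0$; hence $\tilde r^2=e^{\varphi_2(\tau)}(b-\tau)$, and the implicit function theorem gives $\tau$ (and therefore $f_1,f_2$) as smooth functions of $\tilde r^2$. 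Substituting that for your series argument makes your proof complete.
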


\begin{proof}
The proof of this lemma is well-known. One can  consult references \cite{Ca2,HS, ACGT}. Here we write out details.

Since the metric $\omega$ on $E-M$ is bundle adapt, we only need to prove that the metric $\omega_0=i\partial\bar\partial f(\tau)$ defined on fiber $E|_q-\pi(q)=\mathbb C^n-0$ can be extended to $\mathbb CP^{n-1}$.

First recall that   $\mathbb CP^n\setminus [1,0,\cdots,0] $ is bi-holomorphic to the line bundle $\mathcal O(1)$ over $\mathbb CP^{n-1}$. Let  $[v_0,\cdots, v_n]$  be the homogeneous  coordinates of $\mathbb CP^{n}$. Here the open set $\mathbb C^n$ is $v_0=1$ and the hyperplane $\mathbb CP^{n-1}$ is $v_0=0$. Hence $w_\alpha=\frac{v_\alpha}{v_0}$ is the $\alpha-$th coordinate of $\mathbb C^n$ and  $[v_1,\cdots,v_n]$ is the  homogeneous coordinates of $\mathbb CP^{n-1}$.
Let $U_\alpha=\{[v_1,\cdots, v_n]\ |\ v_\alpha\not=0\}$ and define $w_\beta^\alpha=\frac{v_\beta}{v_\alpha}$ with $\beta\not=\alpha$. Let $v^{\alpha}$ be the coordinate of the trivialization of $\mathcal O(1)|_{U_\alpha}$. Then its transition function defined on $U_\alpha\cap U_\beta$ is
$$ v^\alpha=\frac{1}{w_\beta^\alpha}v^\beta=w_\alpha^\beta v^\beta.$$
The bi-holomorphic map $\psi:\mathbb CP^n-[1,0,\cdots,0]\to \mathcal O(1)$ is
$$[v_0,\cdots,v_n]\mapsto [w^\alpha_1,\cdots,w^\alpha_{\beta-1},w^\alpha_{\beta+1},\cdots, w^\alpha_n,\frac{v_0}{v_\alpha}],\qquad \text{for}\ v_\alpha\not=0.$$
Define on $\mathcal O(1)$ the function
$$\tilde r^2=\frac{|v^\alpha|^2}{1+\sum_{\beta\not=\alpha}|w_\beta^\alpha|^2}.$$
We have $\tilde r^2=\frac 1 {r^2}$ on $\mathbb C^n-0$.

By Direct computation, we have
 \begin{equation*}
 \begin{aligned}
 \omega_0=& -\sqrt{-1}f'(t)\partial\bar\partial \log\tilde r^2+\sqrt{-1}f''(t)\partial\log \tilde r^2\wedge \bar\partial\log \tilde r^2   \\
 =&(f'(t)+f''(t))\omega_{FS}+\frac{f''(t^2)}{\tilde r^2}(\sqrt{-1}\partial\bar\partial\tilde r^2)\\
 =&(\tau+\varphi(\tau))\omega_{FS}+\frac{\varphi(\tau)}{\tilde r^2}(\sqrt{-1}\partial\bar\partial\tilde r^2)
 \end{aligned}
 \end{equation*}
Define the functions
$$ f_1(\tilde r^2)=\left\{\begin{array}{ll}
\tau+\varphi(\tau), & \tilde r^2>0\\
b& \tilde r^2=0\end{array},
\right.
$$
and
\begin{equation*}
f_2(\tilde r^2)=\left\{\begin{array}{ll}
\frac{\varphi(\tau)}{\tilde r^2}& \tilde r^2>0\\
1& \tilde r^2=0.
\end{array}
\right.
\end{equation*}
Since as $\tilde r^2\to 0$, $\tau\to b$ and $\lim_{\tau\to b}\varphi(\tau)=0$, the function $f_1(\tilde r^2)$ is continuous at $\tilde r^2=0$. As to $f_2$, we
shall  prove that if we take a suitable constant in (\ref{ttau}), then it is also continues at $\tilde r^2=0$.

In fact $f_2(\tilde r^2)$ is smooth. Since $\varphi(b)=0$, $\varphi'(b)=-1$, and $\varphi$ is rational, we can write
\begin{equation*}
\varphi(\tau)=(b-\tau)(1+(b-\tau)\varphi_1(\tau))
\end{equation*}
 for some smooth function $\varphi_1(\tau)$. Then
$$t=\int \frac{1}{\varphi(\tau)}d\tau=-\log (b-\tau)-\varphi_2(\tau).$$
Here $\varphi_2(\tau)$ is a smooth function with $\varphi_2(b)=0$.
Hence
\begin{equation}\label{hh}
\tilde r^2=\frac{1}{r^2}=e^{-t}=e^{\varphi_2(\tau)}(b-\tau)
\end{equation}
and
\begin{equation*}
  \begin{split}
  f_2(\tilde r^2)=(1+(b-\tau)\varphi_1(\tau))e^{-\varphi_2(\tau)}
\end{split}
\end{equation*}
is a smooth function of $\tau$. Moreover, by the implicit function theorem, we can solve (\ref{hh}) to get a smooth function $\tau=\tau(\tilde r^2)$. Hence $f_2(\tilde r^2)$ is a smooth function of $\tilde r^2$.
Now we can also see that $f_1(\tilde r^2 )$ is  smooth at $\tilde r^2=0$ since
$$f_1(\tilde r^2)=\tau +f_2(\tau)\tilde r^2.$$

The metric $\omega_0$ can be extended across $\mathbb CP^{n-1}$ by defining
$$\tilde \omega_0=f_1(\tilde r^2)\omega_{FS}+f_2(\tilde r^2)(\sqrt{-1}\partial\bar\partial\tilde r^2)$$
Since $d(\omega_0)=0$ and $f_1(\tilde r^2)$ and $f_2(\tilde r^2)$ is smooth at $\tilde r^2=0$, $\tilde \omega_0$ is also K\"ahler.
\end{proof}

According to Lemma \ref{lem4}, the momentum profile $\varphi$ in (\ref{70401}) gives a complete cscK metric on  $\mathbb P(E\oplus \mathcal O)- M$ if and only if
 \begin{enumerate}
  \item[(i)] $\varphi(b)=0$ and $\varphi'(b)=-1$ with $b>a$,
   \item[(ii)]$\varphi(\tau)$ is positive on domain $(a,b)$.
 \end{enumerate}

However, condition (ii) is satisfied  automatically if condition (i) holds. For one  can show that under condition (i) $a$ is the unique solution of $P(x)=0$ for $x\in (0,b)$.
In fact, we have the following result.
\begin{lemma}\label{lem0}
If $\varphi(b)=0$ and $\varphi'(b)=-1$, then $\varphi(\tau)>0$ on domains $(0,a)$ and $(a,b)$.
\end{lemma}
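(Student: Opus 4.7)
The plan is to work with the polynomial $y:=\varphi Q=P$ and control its zeros in $(0,b]$ by counting zeros of $y''$. Writing $\kappa(\tau)\tau(1+\lambda\tau)=K(\tau):=-c\lambda\tau^2+(c_M+n(n-1)\lambda-c)\tau+n(n-1)$, a quadratic in $\tau$ with $K(0)=n(n-1)>0$, and using $y''=\kappa Q$, I note that on the interval $(0,-1/\lambda)$ (or $(0,+\infty)$ when $\lambda\ge 0$), where both $Q$ and $\tau(1+\lambda\tau)$ are positive, the sign of $y''$ equals that of $K$. Hence $y''$ has at most two positive zeros, say $\tau_1<\tau_2$. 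The data to exploit are $y(a)=y'(a)=0$, $y(b)=0$, and $y'(b)=-Q(b)<0$.

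I would then argue by contradiction: assume $y$ has an extra zero $c\in(0,a)\cup(a,b)$. First $c$ must be simple, since a double extra zero would push the total multiplicity of $y$ in $(0,b]$ to at least $5$ and thus $y''$ to at least three positive zeros, already impossible. Double application of Rolle's theorem, using $y'(a)=0$ coming from the double root at $a$, produces at least two zeros of $y''$ in $(0,b)$, so both roots of $K$ must lie there; combined with $K(0)>0$ this forces $K$ to open upward with $K>0$ outside $[\tau_1,\tau_2]$ and $K<0$ inside. If $c\in(a,b)$, the Rolle construction places both $\tau_1,\tau_2$ in $(a,b)$, so $a<\tau_1$: strict convexity of $y$ on $(0,\tau_1)$ with $y'(a)=0$ forces $y(\tau_1)>0$, and strict convexity on $(\tau_2,b)$ together with $y(b)=0,\,y'(b)<0$ forces $y>0$ on $(\tau_2,b)$, giving $y(\tau_2)>0$; but on the concave middle interval $(\tau_1,\tau_2)$, $y$ lies above the chord joining its positive boundary values and cannot vanish, contradicting $y(c)=0$. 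If $c\in(0,a)$, the two zeros of $y''$ straddle $a$, so $\tau_1<a<\tau_2$; concavity of $y$ on $(\tau_1,\tau_2)$ with $y(a)=y'(a)=0$ makes $a$ a strict maximum of value $0$, yielding $y(\tau_2)\le 0$, while the convex argument on $(\tau_2,b)$ still forces $y(\tau_2)>0$, a contradiction.

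It follows that $y$ has no extra zeros in $(0,b]$ and so has constant sign on each of $(0,a)$ and $(a,b)$. On $(a,b)$ the sign is positive because $y(b)=0$ with $y'(b)<0$ makes $y>0$ just before $b$. On $(0,a)$ I would then rule out $\kappa(a)\le 0$: a negative value would force $y<0$ near $a^+$ and, combined with $y>0$ near $b^-$, create an extra zero in $(a,b)$ already excluded, while $\kappa(a)=0$ would promote $a$ to a triple zero of $y$ (assuming $K'(a)\ne 0$) with sign change across $a$, violating the constant-sign conclusion. Hence $\kappa(a)>0$ and $y\sim\tfrac12\kappa(a)Q(a)(\tau-a)^2>0$ near $a^-$, so $y>0$ on $(0,a)$. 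The main obstacle is that the bare Rolle count, which only bounds $y''$ to at most two positive zeros, is exactly tight and in principle permits one extra zero of $y$; the contradiction is extracted only by exploiting the specific quadratic structure of $K$, in particular $K(0)=n(n-1)>0$ forcing $K$ to open upward whenever both its zeros are positive, to pin down the convex/concave partition of $y$ and then run the convex-minimum / concave-above-chord geometry.
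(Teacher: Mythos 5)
Your proposal follows essentially the same route as the paper's proof: both rest on the identity $P''=\kappa Q$, the observation that $K(\tau)=\kappa(\tau)\tau(1+\lambda\tau)$ is a quadratic with $K(0)=n(n-1)>0$ (so $P''$ has at most two zeros on the positive domain), and a Rolle count showing $P$ admits at most one extra zero beyond $a$ (double) and $b$, which is then excluded by a sign and convexity case analysis. Your version is in fact more explicit than the paper's, which disposes of the two subcases with one‑line assertions, and your convex/concave chord argument is a clean way to make those assertions rigorous. The one soft spot is the determination of the sign on $(0,a)$ in the subcase $\kappa(a)=0$: a sign change of $y$ across $a$ does not by itself contradict anything, since constancy of sign is only known separately on $(0,a)$ and on $(a,b)$, so $y<0$ throughout $(0,a)$ is still on the table at that point; and the subcase $K'(a)=0$ is flagged but not closed. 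Both are repaired with tools you already have. If $\kappa(a)=0$ and $K'(a)=0$, then $K(\tau)=A(\tau-a)^2$ with $A=n(n-1)/a^2>0$, so $y$ is convex on all of $(0,b)$ and $y(a)=y(b)=0$ forces $y\le 0$ on $[a,b]$, contradicting $y>0$ just left of $b$. If $\kappa(a)=0$ and $K'(a)>0$, then $K<0$ just left of $a$ while $K(0)>0$, so the second root of $K$ lies in $(0,a)$; hence $K>0$ on $(a,b)$ and the same convexity contradiction applies. If $K'(a)<0$, then $y'''(a)=\kappa'(a)Q(a)<0$ makes $y<0$ just to the right of $a$, contradicting the already established positivity on $(a,b)$. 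With that patch your argument is complete and matches the paper's in both statement and strategy.
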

\begin{proof}
Since $P''(\tau)=\kappa(\tau)=0$ has at most two roots and we have already $P(a)=P(b)=P'(a)=0$, $P(\tau)=0$ has at most one root $\xi$ in $(0,a)$ or in $(a,b)$.
Also since  $\varphi'(b)=-1$ and $\varphi(b)=0$, $P(\tau)$ is positive as $\tau\to b$. Hence if $\xi\in(a,b)$, there are two cases should be considered. One is $\varphi(\tau)> 0$ on $(a,\xi)\cup(\xi,b)$, the other is $\varphi(\tau)<0$ on $(a,\xi)$. The former is impossible as $\varphi'(\xi)=0$ which would lead to $\kappa(\tau)=0$ has at least three roots. The latter is also impossible as from this one can derive $\varphi''(a)=0$ which would also lead the contradiction. Thus, $\xi\notin (a,b)$.

Since $\varphi(\tau)>0$ when $\tau$ is  near zero, as the same reason,  $\xi\notin (0,a)$.
\end{proof}

Hence,  in the following we only need to find a constant $b$ such that  condition (i) is satisfied. We can solve constants $c_M$ and $c$ from $\varphi(b)=0$ and $\varphi'(b)=-1$ as:
\begin{equation}\label{cM6}
  c_M=\frac{n(n-1)H_1+ L_1 }{H_2}
\end{equation}
and
\begin{equation*}
  c=\frac{n(n-1)H_3+L_2}{H_2}
\end{equation*}
with the definitions (\ref{HHH}) and of
$$
\begin{aligned}
&L_1=Q(b)\int_a^bxQ(x)dx-bQ(b)\int_a^bQ(x)dx,\\
&L_2=bQ(b)\int_a^bQ(x)dx- Q(b)\int_a^b\frac{xQ(x)}{1+\lambda x}dx.
\end{aligned}
$$
So we should determine the range of $c_M$ such that $\varphi(\tau)$  satisfies (i).

 \begin{proposition}\label{ppp}
If $\lambda<0$, the range of   $c_M$ is $\mathbb R$.
 \end{proposition}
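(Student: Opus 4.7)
The plan is to show that the continuous function $(a,b)\mapsto c_M(a,b)$ defined by (\ref{cM6}) on the open connected triangle $\Omega=\{(a,b): 0<a<b<-1/\lambda\}$ is unbounded both above and below; surjectivity onto $\mathbb R$ will then follow by the intermediate value theorem along any path joining two limit regimes. Positivity of the denominator $H_2$ on $\Omega$ has already been established in the proof of Proposition \ref{ppppp}, so $c_M$ is continuous.

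To produce the limit $c_M\to-\infty$, I would fix $a_0\in(0,-1/\lambda)$ and let $(a,b)=(a_0,a_0+\delta)$ with $\delta\to 0^+$. Both $H_1$ and $H_2$ are Cauchy--Schwarz (Gram) defects on a shrinking interval and hence vanish to order $\delta^4$ with positive leading coefficients, whereas an elementary Taylor computation gives $B-bA=-\tfrac12 Q(a_0)\delta^2+O(\delta^3)$, hence $L_1=-\tfrac12 Q(a_0)^2\delta^2+O(\delta^3)$. Therefore $n(n-1)H_1$ is negligible compared with $L_1$, and $c_M\sim L_1/H_2\sim -C\delta^{-2}\to-\infty$ for some $C>0$.

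To produce the limit $c_M\to+\infty$, the naive radial limit $(a,b)=(\zeta,k\zeta)\to(0,0)$ will not work: an explicit calculation for $n=2$ shows the leading coefficient equals $(k-1)^2(1-4k)/6<0$ for $k>1$, so $c_M\to-\infty$ along every such ray. I would instead use the asymmetric path $(a,b)=(\zeta,\zeta^s)$ for a fixed $s\in(0,1)$ with $\zeta\to 0^+$; this path lies in $\Omega$ for small $\zeta$ and satisfies $a/b=\zeta^{1-s}\to 0$. Expanding the integrals in powers of $\zeta^s$ using $(1+\lambda x)^m=1+m\lambda x+O(x^2)$, the leading $\zeta^{2sn}$-contributions to $n(n-1)H_1$ and to $L_1$ cancel exactly (both reduce to $\pm 1/[n(n+1)]$), and carrying the expansion one order further one finds
\begin{equation*}
n(n-1)H_1+L_1\sim-\frac{2m\lambda}{n(n+1)(n+2)}\,\zeta^{s(2n+1)},\qquad H_2\sim-\frac{\lambda}{n(n+1)^2(n+2)}\,\zeta^{2s(n+1)}.
\end{equation*}
Both leading coefficients are positive since $\lambda<0$ and $m\geq 1$, so $c_M(\zeta,\zeta^s)\sim 2m(n+1)\zeta^{-s}\to+\infty$.

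The main obstacle is this leading-order cancellation in the $+\infty$ limit: one must retain the first $\lambda$-correction in each of the integrals $A$, $B$, $I_1$, $I_2$ and in $Q(b)$ so as to extract the subleading $\zeta^{s(2n+1)}$-coefficient and verify that it has definite sign. Once both limits are in hand, continuity of $c_M$ on the connected open set $\Omega$ and the intermediate value theorem yield $c_M(\Omega)=\mathbb R$, completing the proof.
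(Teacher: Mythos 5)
Your proposal is correct and follows essentially the same route as the paper: exhibit one limiting regime where $c_M\to-\infty$ and another where $c_M\to+\infty$, then conclude by continuity of $c_M=\tilde H$ on the connected parameter domain (the paper uses the paths $b=2a$ and $b=\sqrt a$ with $a\to0^+$, the latter being exactly your $(\zeta,\zeta^s)$ path at $s=1/2$). Your $-\infty$ regime --- collapsing $b\to a_0^+$ at a fixed interior $a_0$, where the Gram defects $H_1,H_2$ are $O(\delta^4)$ while $L_1\sim-\tfrac12 Q(a_0)^2\delta^2$ --- is a clean alternative to the paper's explicit sign computation of the leading coefficient along $b=2a$, but the overall strategy is the same.
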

 \begin{proof}
Let $\tilde{H}=\frac{n(n-1)H_1+ L_1 }{H_2}$.  First, we take $b=2a$ and  estimate  $\tilde H(a,2a)$ as $a\to 0^+$.  We get
$$(n(n-1)H_1+L_1)(a,2a)= \alpha_1 a^{2n}+O(a^{2n+1})$$
with
$$  \alpha_1=\frac{-2^{n-1}(n+1)^2+n}{n(n+1)}<0,$$
 and $H_2(a,2a)=O(a^{2n+1})$. Since when $\lambda<0$, $H_2(a,2a)>0$, we have
 $$\lim_{a\to 0^+}H(a,2a)=-\infty.$$

On the other hand, we take $b=\sqrt{a}$ and do estimates. As $a\to 0^+$, we also have
$$(n(n-1)H_1+L_1)(a,\sqrt{a})=\alpha_2 a^{n+\frac 12 }+O(a^{n+1}) \qquad \textup{with}\ \ \alpha_2=\frac{-2\lambda m}{n(n+1)(n+2)}>0,$$
and $H_2(a,\sqrt{a})=O(a^{n+1})$. Hence we have $\lim_{a\to 0^+}\tilde H(a,\sqrt{a})=+\infty$.

Now the result follows from the  continuity of $\tilde H$.
\end{proof}

\begin{proposition}\label{pppp}
If $\lambda>0$, the range of $c_M$ is $(m(m+2n-1)\lambda,\infty)$.
\end{proposition}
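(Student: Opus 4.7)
The plan is to follow the strategy of Proposition \ref{ppp} but adapted to $\lambda>0$. Since $\lambda>0$, the admissible region for $(a,b)$ is now the full open set $\{0<a<b\}$, with no upper cap $b<-1/\lambda$. On this region $Q(\tau)=(1+\lambda\tau)^m\tau^{n-1}>0$, so the argument of Lemma \ref{lem0} still applies and the conditions $\varphi(b)=0$, $\varphi'(b)=-1$ automatically ensure $\varphi>0$ on $(a,b)$. Thus the task reduces to identifying the image of the continuous function $\tilde{H}(a,b)=(n(n-1)H_1+L_1)/H_2$ on $\{0<a<b\}$.

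First I would pin down the two boundary limits of $\tilde{H}$. For $a$ fixed and $b\to\infty$, since $Q(x)$ is polynomial every integral in $H_1,L_1,H_2$ is polynomial in $b$, and the leading $b^{2m+2n}$ coefficients can be read off explicitly. A short algebraic computation using the identity $n(n-1)-(m+n-1)(m+n)=-m(m+2n-1)$ shows that both $n(n-1)H_1+L_1$ and $H_2$ have leading coefficients proportional to $-1/((m+n-1)(m+n)^2(m+n+1))$, so the ratio collapses to $\lim_{b\to\infty}\tilde{H}(a,b)=m(m+2n-1)\lambda$. At the opposite end, letting $\epsilon=b-a\to 0^+$ and Taylor expanding, the Cauchy--Schwarz structure of $H_1,H_2$ encoded in the symmetrized representations
$$2H_1=\iint_{[a,b]^2}\frac{Q(x)Q(y)(y-x)^2}{xy}\,dx\,dy,\quad 2H_2=-\lambda\iint_{[a,b]^2}\frac{Q(x)Q(y)(y-x)^2}{(1+\lambda x)(1+\lambda y)}\,dx\,dy$$
forces $H_1,H_2=O(\epsilon^4)$, while $L_1=-Q(b)\int_a^b(b-x)Q(x)\,dx=-Q(a)^2\epsilon^2/2+O(\epsilon^3)$. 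Hence $\tilde{H}\sim 6(1+\lambda a)^2/(\lambda\epsilon^2)\to +\infty$ as $b\to a^{+}$.

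The hard part will be the strict lower bound $\tilde{H}(a,b)>m(m+2n-1)\lambda$ for every admissible $(a,b)$. Without it the continuous image could in principle dip below the limit value and the range need not be $(m(m+2n-1)\lambda,+\infty)$. I would attack this in two ways. The first is to push the $b\to\infty$ asymptotic one order further: because the $b^{2m+2n}$ contribution to $n(n-1)H_1+L_1-m(m+2n-1)\lambda H_2$ already vanishes by the identity above, the sign of the $b^{2m+2n-1}$ coefficient, combined with the negativity of $H_2$ valid throughout when $\lambda>0$, pins down the direction of approach, and a direct calculation should confirm approach strictly from above. A more conceptual alternative is to combine the double-integral representations above with $-L_1=Q(b)\int_a^b(b-x)Q(x)\,dx>0$ via an AM--GM or rearrangement inequality to force $n(n-1)H_1+L_1-m(m+2n-1)\lambda H_2<0$ directly for every $0<a<b$.

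Once the strict bound is in hand, the conclusion is immediate: for any fixed $a>0$, the continuous function $b\mapsto\tilde{H}(a,b)$ on $(a,\infty)$ runs between $+\infty$ at $b\to a^+$ and $m(m+2n-1)\lambda^{+}$ at $b\to\infty$, so by the intermediate value theorem it realizes every value in $(m(m+2n-1)\lambda,+\infty)$, and no value at or below that threshold is ever attained. This is exactly the range of $c_M$ claimed.
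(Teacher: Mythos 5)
Your proposal follows the same overall strategy as the paper: reduce to the image of $\tilde H=(n(n-1)H_1+L_1)/H_2$, compute two boundary limits, establish the strict lower bound, and conclude by the intermediate value theorem; your $b\to\infty$ asymptotics and the resulting limit $m(m+2n-1)\lambda$ agree with the paper's. The one genuine difference is the second degeneration: the paper sends $a\to 0^+$ along $b=2a$ and uses the sign flip $H_2(a,2a)<0$ for $\lambda>0$ to get $\tilde H\to+\infty$, whereas you fix $a$ and let $b\to a^+$, using the symmetrized double-integral (Cauchy--Schwarz) representations of $H_1,H_2$ to see they are $O(\epsilon^4)$ against $L_1=O(\epsilon^2)$. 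Your variant is arguably cleaner, since it shows that for \emph{each} fixed $a$ the function $b\mapsto\tilde H(a,b)$ already sweeps out all of $(m(m+2n-1)\lambda,\infty)$, rather than relying on a limit along a one-parameter family; your expansion $\tilde H\sim 6(1+\lambda a)^2/(\lambda\epsilon^2)$ checks out. On the crucial strict inequality $n(n-1)H_1+L_1-m(m+2n-1)\lambda H_2<0$ for all $0<a<b$, you correctly identify it as the hard step and sketch two plausible attacks (next-order asymptotics, or an AM--GM/rearrangement argument on the double-integral representations) without carrying either out; the paper does no better, asserting only that ``this is a calculus exercise'' left to the reader. So the proposal matches the paper in substance and in its one acknowledged incompleteness; to make either argument fully rigorous one would still have to supply that inequality.
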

\begin{proof}
First we note that when $\lambda>0$, $H_2(a,2a)<0$. Thus by the estimates in the proof of the above lemma, we get $\lim_{a\to 0^+}\tilde H_2(a,2a)\to +\infty$.

Next we do estimates: as $b\to +\infty$��
\begin{align*}
&  (n(n-1)H_1+L_1)(a,b)\sim \frac{-m(m+2n-1)}{(m+n)^4-(m+n)^2}b^{2(m+n)}\\
 & H_2\sim\frac{-\lambda^{2m-1}b^{2m+2n}}{(m+n)^2(m+n-1)(m+n+1)}
\end{align*}
Hence
\begin{equation*}
  \lim _{b\to \infty} c_M=m(m+2n-1)\lambda
\end{equation*}

At last, we need  to prove
$$c_M>m(m+n-1)\lambda$$
i.e., to prove when $b\geq a$,
$$K(b)=n(n-1)H_1(b)+L_1(b)-m(m+2n-1)\lambda H_2(b)<0.$$
This is a calculus exercise and we leave to readers.

In summary, the range of $c_M$ is $(m(m+2n-1)\lambda,\infty)$.
\end{proof}

\begin{proposition}\label{ooo}
The metrics in Propositions \ref{ppp} and \ref{pppp} admit the  PMY asymptotic property.
\end{proposition}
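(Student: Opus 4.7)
The plan is to verify PMY only near the zero section $M$, since by Lemma \ref{lem4} the metric constructed in Propositions \ref{ppp} and \ref{pppp} extends across the infinity divisor $D_\infty$. By the analysis carried out in the proof of Proposition \ref{pop}, PMY near $M$ is equivalent to $\kappa(a)>0$, where $\kappa(\tau)=c_M/(1+\lambda\tau)+n(n-1)/\tau-c$; indeed $\varphi(a)=\varphi'(a)=0$ forces $\varphi''(a)=\kappa(a)$, and only a genuine double zero of $\varphi$ at $a$ lets $d\nu=d\tau/\varphi$ integrate to the required $-(2/\kappa(a))\log(-\log r^2)$ correction. Since $\varphi>0$ on $(a,b)$, automatically $\kappa(a)\geq 0$, so the task reduces to excluding $\kappa(a)=0$.

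I would proceed by contradiction. Writing $P(\tau)=\varphi(\tau)Q(\tau)=(\tau-a)^2(\tau-b)R(\tau)$ with $R$ a polynomial of degree $m+n-2$, one has $P''(a)=2(a-b)R(a)$, so $\kappa(a)=0$ is equivalent to $R(a)=0$. Here Lemma \ref{lem0} is crucial: it guarantees $\varphi>0$ on $(0,a)\cup(a,b)$, and since $Q>0$ and $\tau-b<0$ on that set, $R<0$ strictly on $(0,a)\cup(a,b)$. A polynomial that is strictly negative on both sides of an interior point $a$ and vanishes at $a$ must vanish there to even order, so $R'(a)=0$ and $R(\tau)=(\tau-a)^2R_2(\tau)$, giving $P(\tau)=(\tau-a)^4(\tau-b)R_2(\tau)$.

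Consequently $P''(\tau)$ carries a factor of $(\tau-a)^2$. On the other hand, $P''(\tau)=\kappa(\tau)Q(\tau)=\tilde\kappa(\tau)(1+\lambda\tau)^{m-1}\tau^{n-2}$ where $\tilde\kappa(\tau)=-c\lambda\tau^2+(c_M+n(n-1)\lambda-c)\tau+n(n-1)$ is at most quadratic. Since $(1+\lambda a)^{m-1}a^{n-2}\neq 0$, the factor $(\tau-a)^2$ must divide $\tilde\kappa$; together with $\tilde\kappa(0)=n(n-1)>0$, this forces $c\lambda\neq 0$ and $\tilde\kappa(\tau)=-c\lambda(\tau-a)^2$. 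To conclude I would apply Rolle twice: from $P(a)=P(b)=0$ there is $c_0\in(a,b)$ with $P'(c_0)=0$, and then from $P'(a)=P'(c_0)=0$ there is $d_1\in(a,c_0)$ with $P''(d_1)=0$. But $d_1\neq a$ and $(1+\lambda d_1)^{m-1}d_1^{n-2}\neq 0$ throughout the admissible range of $\tau$ for both signs of $\lambda$, so $\tilde\kappa(d_1)=0$, contradicting $\tilde\kappa(\tau)=-c\lambda(\tau-a)^2$.

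The hard part is the sign/multiplicity bookkeeping for $R$ at $\tau=a$, which relies essentially on Lemma \ref{lem0}'s two-sided positivity of $\varphi$ rather than just its positivity inside $(a,b)$. Once $\kappa(a)>0$ is established, the explicit PMY expansion $f(r^2)=a\log r^2-(2/\kappa(a))\log(-\log r^2)+O((\log r^2)^{-1})$ follows verbatim from case (i) of Proposition \ref{pop}.
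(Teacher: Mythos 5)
Your proof is correct, and it shares the paper's skeleton --- reduce PMY to $\kappa(a)>0$ via Proposition \ref{pop}, assume $\kappa(a)=0$, use the two-sided positivity from Lemma \ref{lem0} to upgrade the vanishing of $\varphi$ at $a$ from order $3$ to order $4$, then derive a contradiction by root counting --- but your endgame is genuinely different and more uniform. The paper translates the fourth-order vanishing into $\kappa'(a)=0$, computes $\kappa'(a)=-\lambda c_M/(1+\lambda a)^2-n(n-1)/a^2$ explicitly, and must then split into cases: for $\lambda c_M\geq 0$ the sign is immediate, while for $\lambda<0$, $c_M>0$ it runs a separate Rolle argument to place a root of $\kappa''$ in $(0,b)$ and contradicts the positivity of $\kappa''(x)=2\lambda^2c_M/(1+\lambda x)^3+2n(n-1)/x^3$. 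You instead strip the nonvanishing factor $(1+\lambda\tau)^{m-1}\tau^{n-2}$ off $P''=\kappa Q$ to isolate the genuine quadratic $\tilde\kappa$, force $\tilde\kappa=-c\lambda(\tau-a)^2$ from divisibility and $\tilde\kappa(0)=n(n-1)>0$, and kill all sign cases at once with a single Rolle application producing a root of $P''$ in $(a,b)$ where $\tilde\kappa$ cannot vanish. What your version buys is the elimination of the case analysis on the sign of $\lambda c_M$ and of the explicit formulas for $\kappa'$ and $\kappa''$; what the paper's version buys is that it stays entirely in terms of the function $\kappa$ already introduced for Proposition \ref{pop}, at the cost of the case split. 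Both arguments lean on Lemma \ref{lem0} in the same essential way (your ``$R<0$ on both sides of $a$ forces even-order vanishing'' is exactly the paper's ``$a$ is a minimum of $\varphi$ forces $\varphi'''(a)=0$''), so neither is more elementary in its inputs; yours is simply tighter in its conclusion.
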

\begin{proof}
We need to prove $\kappa(a)>0$. By Lemma \ref{lem0},  $a$ is a minimum of $\varphi(x)$. From the Taylor expansion of $\varphi(x)$ at $x=a$, if $\kappa(a)=0$,  $\kappa'(a)=-\frac{\lambda c_M}{(1+\lambda a)^2}-\frac{n(n-1)}{a^2}=0$. So if $\lambda c_M\geq 0$, it is impossible. Thus for $\lambda>0$ (hence $c_M>0$), or $\lambda<0$ and $c_M\leq 0$, the metrics are PMY. For the case  $\lambda<0$ and $c_M>0$, if $k'(a)=0$, then
  $$\kappa''(x)=\frac{2\lambda^2 c_M}{(1+\lambda x)^3}+\frac{2n(n-1)}{x^3}$$
  would have one root in   $ (0,b)$ when  $\varphi(a)=\varphi'(a)=\varphi''(a)(=\kappa(a))=\varphi(b)=0$. But it is impossible when $c_M>0$. Hence $\kappa'(a)\not=0$ and then $\kappa(a)>0$.   The metrics are also PMY.
\end{proof}

\begin{proof}[The proof of Theorem \ref{thmpm}]
It follows from Propositions \ref{ppp}, \ref{pppp}, and \ref{ooo}.
\end{proof}

We give an example of cscK metric with $\lambda>0$.
 \begin{example}
   Let $m=1$, $n=2$, $\lambda=1$ and $a=1$. Then
   \begin{align*}
    \varphi(\tau)&=\frac{1}{\tau(\tau+1)}\int_1^{\tau}(\tau-x)(c_M x+2(1+x)-cx(1+x))dx\\
    &=-\frac{1}{12\tau(\tau+1)} (-1 + \tau)^2 \biggl(c \tau^2 +(4c-2c_M-4)\tau +7c-4c_M -20 \biggr)
   \end{align*}
We can solve  $\varphi(b)=\varphi'(b)=0$ to get
 $$c_M=\frac{2 (-13 + 37 b + 39 b^2 + 7 b^3 + 2 b^4)}{(-1 + b)^2 (1 + 4 b + b^2)}.$$
For example, let $b=2$. Then
$$c_M=\frac{610}{13}\qquad \text{and}\qquad   c =\frac{276}{13}$$
which implies
$$\varphi(x)=\frac{-23 x^4 +60 x^3+13 x^2 -114 x +64}{13x(1+x)}.$$
We give a picture of $\varphi(\tau)$ as Figure \ref{P5}.
 \begin{figure}[H]
\centering
  \includegraphics[width=0.6\textwidth]{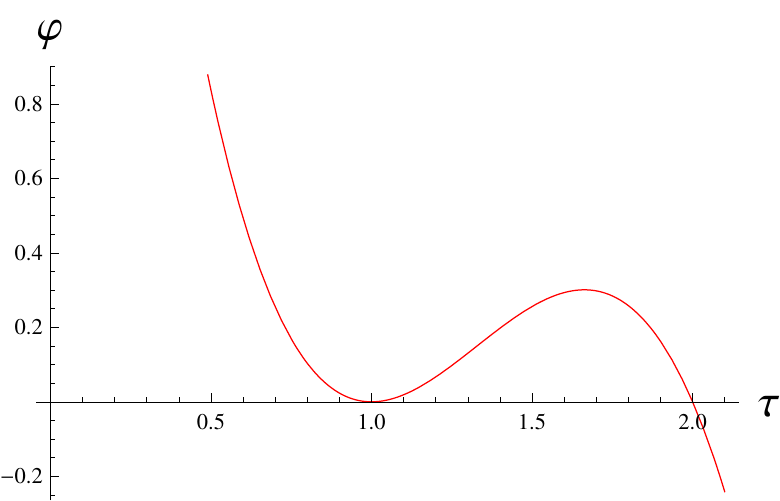}
  \caption{\small \color{blue} the graph of $\varphi(\tau)$ on $[1,2]$ with $\lambda=1$, $c_M=\frac{610}{13}$ and  $c =\frac{276}{13}$.}\label{P5}
\end{figure}
 \end{example}

 We also give two examples of csck metrics with $\lambda<0$.
 \begin{example}
 Let $m=1$, $n=2$, $\lambda=-1$ and $c_M=2$. We choose  $a=0.001$, then the graph as Figure \ref{P6} shows $b\simeq 0.0893745$ and $c\simeq 68.7366$, or $b\simeq 0.998$ and $c\simeq 11.9761$.  We give a picture of $\varphi(\tau)$ with $b=0.0894$ as Figure \ref{P7}.
  \begin{figure}[ht]
  	\includegraphics[width=0.5\textwidth]{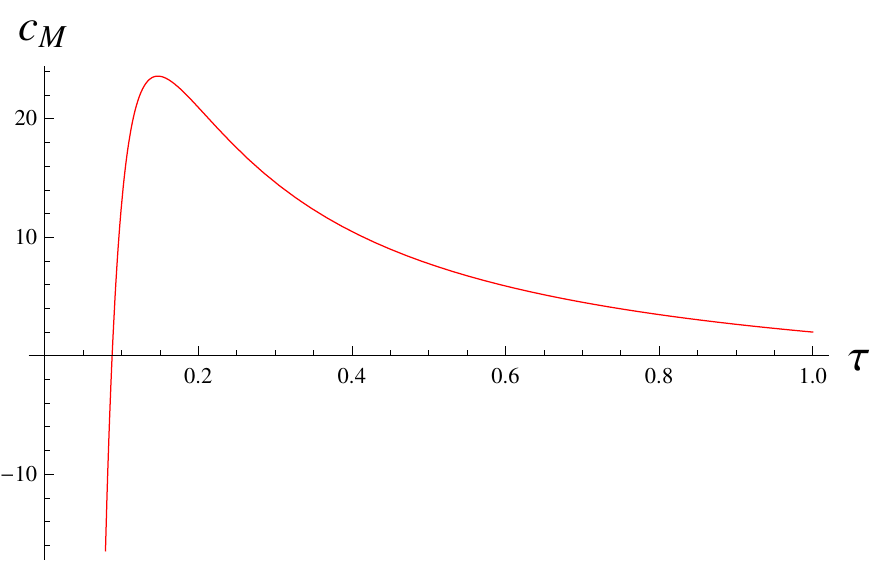}
  	\caption{\small \color{blue} the graph of $c_M$  with $a=0.001$.} \label{P6}
  	\end{figure}	
\begin{figure}[ht]
  	\includegraphics[width=0.5\textwidth]{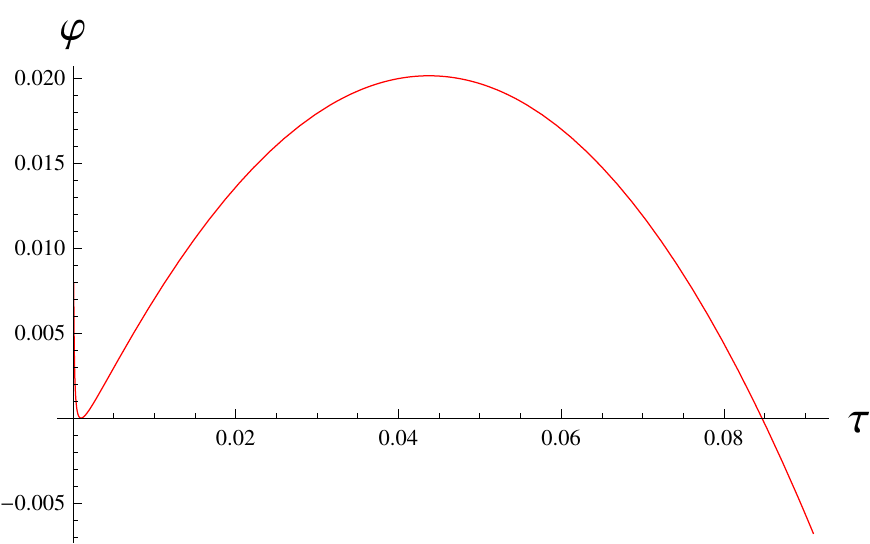}
  	\caption{\small \color{blue} the graph of $\varphi(\tau)$  with $a=0.001$, $\lambda=-1$ and $c_M=2$.}\label{P7}
  	\end{figure}

\end{example}

\begin{example}
Let $m=1$, $n=2$, $\lambda=-1$ and $c_M=-2$. If $a=0.1$, then $b\simeq 0.61146$ and $c\simeq 5.02242$. We give a picture of $\varphi(\tau)$ as Figure \ref{P8}.
   \begin{figure}[h]\centering
  \includegraphics[width=0.5\textwidth]{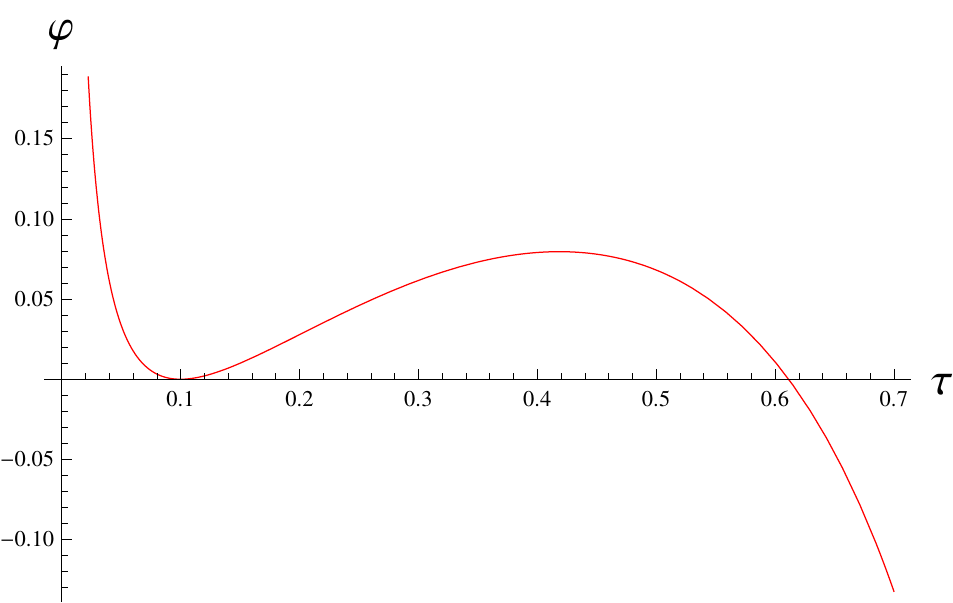}
  \caption{\small \color{blue}  the graph of $\varphi(\tau)$  with $a=0.1$,  $\lambda=-1$ and  $c_M=-2$.}\label{P8}
\end{figure}
\end{example}

\end{document}